\newcommand{\redsout}{\bgroup\markoverwith{\textcolor{red}{\rule[0.5ex]{2pt}{.4pt}}}\ULon}
\newcommand{\e}{\varepsilon}
\newcommand{\curl}{\operatorname{curl}}
\newcommand{\p}{\partial}
\newcommand{\C}{\mathbb{C}}
\newcommand{\R}{\mathbb{R}}
\newcommand{\Z}{\mathbb{Z}}
\newcommand{\supp}{\textup{supp}}
\newcommand{\LC}{\left(}
\newcommand{\RC}{\right)}
\definecolor{skyblue}{rgb}{0.85,0.85,1}
\numberwithin{equation}{section}
\newtheorem{theorem}{Theorem}
\newtheorem{proposition}{Proposition}
\newtheorem{lemma}{Lemma}
\newtheorem{remark}{Remark}
\author[Lai]{Ru-Yu Lai}
\address{School of Mathematics, University of Minnesota, Minneapolis, MN 55455, USA}
\curraddr{}
\email{rylai@umn.edu}
\author[Zhou]{Ting Zhou}
\address{Department of Mathematics, Northeastern University, Boston, MA 02115, USA}
\curraddr{ }
\email{t.zhou@northeastern.edu}
\thanks{\textbf{Key words}: Magnetic Schr\"odinger equation, Nonlinearity, Inverse Problems}
\title[Nonlinear Magnetic Schr\"odinger Equation]{Partial Data Inverse Problems for Nonlinear Magnetic Schr\"odinger Equations} 
\date{}
\begin{document}
 
\maketitle
\begin{abstract}
	 We prove that the knowledge of the Dirichlet-to-Neumann map, measured on a part of the boundary of a bounded domain in $\mathbb R^n, n\geq2$, can uniquely determine, in a nonlinear magnetic Schr\"odinger equation, the vector-valued magnetic potential and the scalar electric potential, both being nonlinear in the solution. \\

\end{abstract}

\section{Introduction}

We investigate an inverse boundary value problem for the nonlinear magnetic Schr\"odinger equations. Let $\Omega \subset \mathbb{R}^n,\, n\geq 2$ be an open {connected} bounded domain with smooth boundary $\partial\Omega$, we consider the boundary value problem
\begin{align}\label{intro_eqn}
\LC D + A (x,u) \RC^2 u + q(x,u) =0\qquad \hbox{ in }\Omega, 
\end{align}
with the boundary condition $u=f$ on $\p\Omega$. Here 
the vector-valued function $A = (A_1,\ldots,A_n)$ is the nonlinear magnetic potential, modeling the effect of an external magnetic field, the scalar function $q$ represents the nonlinear electric potential and $D$ denotes $-i\nabla_x$.
The Dirichlet-to-Neumann (DN) map for the equation is defined by 
\begin{equation}\label{intro_DNmap}
\Lambda_{A,q}: W^{2-1/p,p}(\partial\Omega)\rightarrow W^{1-1/p,p}(\partial\Omega), \quad f \mapsto \nu\cdot \left(\nabla u+iA(x,u)u\right)|_{\p\Omega},
\end{equation}
where $\nu$ is the unit outer normal to $\p\Omega$.  

The type of inverse boundary value problem was first formulated by Calder\'on \cite{calderon} for the linear condituctivity equation $\nabla\cdot\gamma(x)\nabla u=0$ when he sought to determine the electrical conductivity $\gamma(x)$ of a medium by making boundary measurements of electric voltage and current. The unique determination was proved in \cite{sylvester1987global} in dimension $n\geq3$ by solving the problem of determining an electric potential $q(x)$ in a Schr\"odinger operator $-\Delta+q$ from the boundary Dirichlet and Neumann data. Since then, the inverse problem has been extensively studied in various generalized cases. The inverse boundary value problem for the linear magnetic Schr\"odinger equation, where $A(x,z)=A(x)$ and $q(x,z)=q(x)$, has been considered in \cite{CR19, Chung2014, ER95, Dos2009, GT2011, Haberman16, IUY2012, KU14, Lai2011, NSU95, PSU10} and the reference therein. 
Specifically, due to a gauge invariance, one can only expect to recover uniquely the magnetic field $\curl A$ and $q$ from the boundary DN-map.

In dealing with the inverse problems for nonlinear PDEs, a standard approach based on the first order linearization of the DN-map was introduced to identify the linear reaction from the medium, then the full nonlinear medium for certain cases.
See for instance \cite{Sun2002, Isakov93, victor01, victorN, isakov1994global, SunUhlmann97} for the demonstration of the approach in solving the inverse problems for certain semilinear, quasilinear elliptic equations and parabolic equations. 
Recently the higher order linearization of the DN-map has been applied in determining the full nonlinearity of the medium for several different equations. 
The method was successfully applied to solve inverse problems for nonlinear hyperbolic equations on the spacetime \cite{KLU18}, where in contrast the underlying problems for linear hyperbolic equations are still open, see also \cite{CLOP,LUW2018} and the references therein. In particular, the second order linearization of the nonlinear boundary map was studied in \cite{CNV19, Kang2002, Sun96, SunUhlmann97} for nonlinear elliptic equations. Moreover, this higher order linearization technique was also applied to study elliptic equations with power-type nonlinearities, see \cite{FO2019, KU201909, KU201905, LaiL2020, LLLS201903, LLLS201905, Li2020_05, Lin202004}. A demonstration of the method can be found in \cite{AZ2018} on nonlinear Maxwell's equations, in \cite{LUY2020} on nonlinear kinetic equations, and in \cite{LLPT2020_1} on semilinear wave equations.

Given a semilinear elliptic PDE whose leading term is the Laplacian operator, we apply the higher order linearization of the DN-map with respect to the small perturbation around the zero solution. The knowledge of the DN-map, measured partially or completely on the boundary, determines an integral of the product of the $m$-th order term of the nonlinear parameter and $m+1$ harmonic functions or their derivatives. A density argument of the products of harmonic functions or their derivatives is crucial in proving the uniqueness of the $m$-th order term. 
For the inverse problem of the linear equation with DN-map measured only on part of the boundary, the density of the product of harmonic functions, which vanish on a closed proper subset of the boundary, was first shown in \cite{FKSjU09}. More specifically, in \cite{FKSjU09}, this density argument relies on a Runge type approximation result and an idea of propagating exponential decay estimates for FBI transforms by the use of maximum principle as in the Kashiwara's watermelon theorem. In \cite{KU201905, LLLS201905}, this density argument was directly used, along with unique continuation and the maximum principle, to show unique determination of a potential function $q(x)$ in a model equation $-\Delta u+q(x)u^2=0$ or in a more general equation of the form $-\Delta u+V(x, u)=0$, assuming partial data. The argument in \cite{FKSjU09} was then generalized in \cite{KU201909} where the authors of \cite{KU201909} proved the density of the products of the gradients of two harmonic functions, which vanish on part of the boundary, and then use it to show the unique determination of a nonlinear potential $q(x)$ in the equation $-\Delta u+q(x)(\nabla u\cdot \nabla u)=0$. \\
 
\subsection{Problem setup and strategy}
In this paper, the main objective is to determine the nonlinear vector potential $A(x,z)$ and the scalar potential $q(x,z)$ in \eqref{intro_eqn} from the boundary DN-map \eqref{intro_DNmap}. 
We briefly state our strategy using the higher order linearization technique as follows. 
 
Suppose that two sets of potentials $(A_1,q_1)$ and $(A_2,q_2)$ satisfy $A_j,\,q_j: \Omega\times\C\rightarrow \C$,
\begin{equation}\label{A_holomorphic}
\hbox{the map }\C \ni z\mapsto A_j(\cdot,z) \hbox{ is holomorphic with values in }W^{1,\infty}(\Omega , \C^n),  
\end{equation}
\begin{equation}\label{q_holomorphic}
\hbox{the map }\C \ni z\mapsto q_j(\cdot,z) \hbox{ is holomorphic with values in }L^{\infty}(\Omega , \C),  
\end{equation}
and 
\begin{align}\label{Aq_conditions}
A_j(x,0)=0,\ \ q_j(x,0)=\p_z q_j(x,0)=0 
\end{align} 
for $j=1,2$. We have that the potentials admit the following expansions:
\begin{equation*}
A_j(x,z)= \sum^\infty_{k=0} \p_z^kA (x,0) {z^k\over k!},\qquad
q_j(x,z)= \sum^\infty_{k=0} \p_z^kq (x,0) {z^k\over k!}. 
\end{equation*}

Fixing a positive integer $m\geq 1$, let $\varepsilon_k$ be small positive numbers and $f_k\in W^{2-1/p,p}(\p\Omega)$ for $k=1,\ldots, m$. 
We denote $\varepsilon := (\varepsilon_1, \ldots, \varepsilon_m)$ and 
$$
u_j := u_j(x;\varepsilon), \qquad j=1, 2
$$
to be the unique small solution of the Dirichlet problem
\begin{align}\label{eqn:MagnSch0}
\left\{\begin{array}{ll}
(D + A_j (x,u_j ))^2 u_j  + q_j (x,u_j ) = 0 & \mbox{ in } \Omega, \\
u_j  = \e_1 f_1 + \ldots + \e_m f_m & \mbox{ on } \partial \Omega.
\end{array}\right.
\end{align}
We establish the well-posedness for this Dirichlet problem with small data in the Appendix and the DN-map is thus well-defined. 
Moreover, by Theorem \ref{Thm:well-posedness} we know that the finite difference $u_j/\varepsilon_k$ is bounded in $W^{2,p}(\Omega)$ (the bound is independent of $\varepsilon$), hence $u_j$ is differentiable in $\varepsilon_k$ and the derivatives satisfy the linearized Laplace equation. (See for example, \cite{Isakov2001, Sun96} for a more detailed exposition.) By expanding $u_j$ in the small perturbation parameter $\varepsilon_k$ and noting that $u_j(x;0)\equiv0$ due to the well-posedness, we have that the first order term
$$
v_{j,k}:=\partial_{\varepsilon_k} u_j |_{\varepsilon=0}
$$ 
is indeed a harmonic function in $\Omega$ satisfying $v_{j,k}|_{\partial\Omega}=f_k$ for $k=1,\ldots,m$, $j=1,2$.
\begin{remark}\label{rk:v_k}
\begin{enumerate} 
	\item We point out that in this setup, we have harmonic functions $v_{1,k}=v_{2,k}$ for $k=1,\ldots,m$ in $\Omega$ since they agree on the whole boundary.
	
	\item In other cases, the domain $\Omega$ might have unknown geometrical features. For example, if there is an unknown inclusion or obstacle embedded in $\Omega$, or if the part of the boundary where we cannot measure the DN-map has an unknown geometry, then we would have $u_j$ to be the solutions to the magnetic Schr\"odinger equation in $\Omega_j$ associated with $A_j$ and $q_j$ for $j=1,2$. This implies that $v_{1, k}$ and $v_{2,k}$ are harmonic functions in potentially different domains $\Omega_j$. These scenarios are discussed in \cite{LLLS201905} for elliptic equations, where one can show under certain assumptions, using unique continuation, that the domains in above examples are indeed identical. 
	
	\item In this paper, we focus on the case where the domain is known to be $\Omega$. For the partial data inverse problems, we assume that the subsets of the boundary: $\Gamma_1$ and $\Gamma_2$, be where Dirichlet data and Neumann data are measured respectively. By the definition of the partial DN-map in \eqref{eqn:DNmap_p}, we have the harmonic function $v_{j,k}=\partial_{\varepsilon_k} u_j |_{\varepsilon=0}$, $j=1,2$ satisfying the boundary condition
	$$v_{1,k}|_{\p\Omega}=v_{2,k}|_{\p\Omega}=f_k\qquad \hbox{with }\supp(f_k)\subset \Gamma_1.$$ Therefore, in the partial data setting, we still have $v_{1,k}=v_{2,k}$ in $\Omega$, hence we simply denote $$v_k:=v_{j,k}\qquad \hbox{for }k=1,\ldots, m$$ from this point on. 
\end{enumerate} 
\end{remark}

To reconstruct $A(x,z)$ and $q(x,z)$, it is sufficient to consider the unique determination of $\p_z A(x,0), \p^2_z A(x,0), \ldots$ and $\p^2_z q(x,0), \p^3_zq(x,0), \ldots$ in $\Omega$ due to \eqref{Aq_conditions}. The proof is based on induction steps and is sketched as follows. We start with the second order linearization. Let $m=2$ and denote
$$
w_j :=\partial_{\varepsilon_1}\p_{\varepsilon_2} u_j|_{\varepsilon=0}.
$$ 
Then $w_j$ is the solution to the problem
\begin{align}\label{intro:w}
-\Delta w_j  +Q_j^{(2)}(v_1,v_2)=0\quad \textrm{ in }\Omega,\qquad w_j|_{\p\Omega}=0,
\end{align}
where $v_k=v_{j,k}$ for $k=1,2$, as discussed in Remark \ref{rk:v_k}, and 
\begin{equation}\label{Q2}
Q_j^{(2)}(v_{1}, v_{2}):=3\partial_zA_j(x,0)\cdot \left(v_1 D v_2+v_2 Dv_1\right) + 2D_x\cdot \partial_zA_j(x, 0)v_1v_2+ \p_z^2q_j(x,0)v_1 v_2,
\end{equation}
defined in \eqref{eqn:Q}.
Assume that the DN-maps associated to $(A_1,q_1)$ and $(A_2,q_2)$ are identical. We will obtain the integral identity 
\begin{equation*}\int_\Omega \left(Q_1^{(2)}(v_1, v_2)-Q^{(2)}_2(v_1, v_2)\right)v_3~dx=0,\end{equation*}
where $v_3$ is a third harmonic function in $\Omega$ with certain boundary condition. 
One can see that the integral involves several complicated terms of products of harmonic functions and their gradients, as well as mixtures of the vector and scalar potentials, unlike the cases studied in \cite{KU201909, LLLS201905}. 

In the spirit of \cite{FKSjU09, KU201909}, one can potentially use the corrected harmonic exponentials 
\[v(x,\zeta)=e^{-ix\cdot\zeta \over h}+w(x,\zeta), \qquad \zeta\in\mathbb C^n,\;\zeta\cdot\zeta=0\]
that vanishes on a closed proper subset of the boundary, the idea of propagating exponential decay estimates for FBI transforms and a proper version of Runge-type approximation, to prove an improved density result. However, the exponential decay propagation is difficult to derive for the associated FBI type transform of the vector-valued potential (multiplied by the complex phase). Another major difficulty comes from the entanglement of $A$ and $q$ in the mixture of terms. 

Instead, we combine the previously established density result in \cite{FKSjU09} and the corrected harmonic exponentials together to obtain the local uniqueness of the potentials. Then we conduct the local-to-global step, as in the previous work, using the $H^1$ Runge-type approximation. Our key step here lies on a transport equation for the harmonic functions, which helps decouple the potentials. 

The argument can be easily generalized to the case $m>2$ by induction.

\subsection{Main result}

Let us present our main result where we show that partial data on the boundary is sufficient to uniquely determine the nonlinear potentials in the magnetic Schr\"odinger equation. Meanwhile, for the completeness of the paper, we also provide a separate proof for the situation with full data in the Appendix.

Let $\Omega \subset \mathbb{R}^n$ be an open {connected} bounded domain with smooth boundary $\partial\Omega$.  Let $u$ be the solution to the boundary value problem for the magnetic Schr\"odinger equation with nonlinearity:
\begin{align}\label{eqn:MagnSch}
\left\{\begin{array}{ll}
\LC D + A (x,u) \RC^2 u + q(x,u) =0 & \hbox{in }\Omega,\\
u=f & \hbox{on }\p\Omega,\\
\end{array}\right.
\end{align}
where {$A(x,z) \in W^{1,\infty}(\Omega\times\mathbb C,\mathbb C^n)$ and $q(x,z)\in L^\infty(\Omega\times\mathbb C,\mathbb C)$ are both $C^\infty$ in $z$}, and $D :=-i\nabla$.
Assume that $A$ and $q$ satisfy \eqref{A_holomorphic}-\eqref{Aq_conditions}. 
We will show that the Dirichlet problem \eqref{eqn:MagnSch} has a unique solution $u\in W^{2,p}(\Omega)$ for sufficiently small boundary condition $f\in W^{2-1/p,p}(\p\Omega)$ where $p>n$. It is clear that the equation \eqref{eqn:MagnSch} with $f=0$ admits the zero solution $u=0$. Then the full boundary DN-map for such small functions is defined by 
\begin{equation}\label{eqn:DNmap}
\Lambda_{A,q}: W^{2-1/p,p}(\partial\Omega)\rightarrow W^{1-1/p,p}(\partial\Omega), \quad f \mapsto \nu\cdot \left(\nabla u+iA(x,u)u\right)|_{\p\Omega},
\end{equation}
where $\nu$ is the unit outer normal to $\p\Omega$. 
We also define the partial boundary DN-map as follows. Let $\Gamma_1, \Gamma_2 \subset \p\Omega$ be two arbitrary, nonempty open subsets. Then the partial boundary DN-map is defined by
\begin{equation}\label{eqn:DNmap_p}\Lambda_{A,q}^{\Gamma_1,\Gamma_2}(f)=\Lambda_{A,q}(f)|_{\Gamma_2}\qquad \textrm{ for all } f\in W^{2-1/p,p}(\p\Omega) \textrm{ with } \supp (f)\subset \Gamma_1.\end{equation}

\begin{theorem}\label{thm:global uniqueness}
Let {$\Omega\subset \R^n,\ n\geq2$} be an open {connected} bounded domain with $C^\infty$ boundary $\p\Omega$ and
let $\Gamma_1,\Gamma_2\subset \p\Omega$ be arbitrary nonempty open subsets of $\p\Omega$. {Suppose that two sets of coefficients $(A_1, q_1)$ and $(A_2, q_2)$ satisfy \eqref{A_holomorphic}-\eqref{Aq_conditions}} and
\[\nu\cdot \p_z^kA_1(x,0)=\nu\cdot \p_z^kA_2(x,0)\qquad \textrm{ on }\Gamma_1\cap\Gamma_2 \qquad\hbox{ for }{k\geq 1}.\]
Let $\Lambda^{\Gamma_1,\Gamma_2}_{A_j,q_j}$ be the above partial boundary DN-map associated to $(A_j, q_j)$ for $j=1,2$.
Suppose that $\Lambda^{\Gamma_1,\Gamma_2}_{A_1, q_1}(f)=\Lambda^{\Gamma_1,\Gamma_2}_{A_2,q_2}(f)$ for any $f\in W^{2-1/p,p}(\p\Omega)$, $n<p<\infty$ with $\supp (f)\subset\Gamma_1$ and $\|f\|_{W^{2-1/p,p}(\p\Omega)}<\delta$, where $\delta>0$ is a sufficiently small constant. 
Then  
$$
     A_1 =A_2\ \ \hbox{and }\ q_1=q_2\qquad \hbox{ in }\Omega.
$$
\end{theorem}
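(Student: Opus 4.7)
The plan is a higher-order linearization of the partial DN-map at the zero solution, combined with CGO-type harmonic functions adapted to the partial-data setting and an $H^1$ Runge-type density argument, organized as an induction on $m\geq 1$. The $m$-th inductive step recovers $\partial_z^m A_j(\cdot,0)$ and $\partial_z^{m+1} q_j(\cdot,0)$ from the agreement of the partial DN-maps, assuming that all lower-order derivatives already match. The base case $m=1$ carries the essential difficulty; for larger $m$ the integral identity has the same structure, with $v_1, v_2$ replaced by products of $m+1$ harmonic factors, and the induction hypothesis kills all lower-order mismatches in the expansion of $Q_j^{(m+1)}$.

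For the base case, take $f=\varepsilon_1 f_1+\varepsilon_2 f_2$ with $\supp(f_k)\subset\Gamma_1$, differentiate the identity $\Lambda^{\Gamma_1,\Gamma_2}_{A_1,q_1}(f)=\Lambda^{\Gamma_1,\Gamma_2}_{A_2,q_2}(f)$ in $\varepsilon_1\varepsilon_2$ at $\varepsilon=0$, and pair \eqref{intro:w} with a test harmonic function $v_3$ whose trace is supported in $\Gamma_2$. Setting $\widetilde A:=\partial_z A_1(\cdot,0)-\partial_z A_2(\cdot,0)$ and $\widetilde q:=\partial_z^2 q_1(\cdot,0)-\partial_z^2 q_2(\cdot,0)$, the identity \eqref{Q2} gives
$$
\int_\Omega \Bigl[3\widetilde A\cdot\bigl(v_1 Dv_2+v_2 Dv_1\bigr)v_3 + \bigl(2D_x\cdot\widetilde A+\widetilde q\bigr)v_1 v_2 v_3\Bigr]\,dx=0,
$$
to hold for all harmonic $v_1,v_2$ with traces supported in $\Gamma_1$ and all harmonic $v_3$ with trace supported in $\Gamma_2$. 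The obstacle is that $\widetilde A$ appears both in a first-order (gradient-contracted) term and, through its divergence, alongside $\widetilde q$ in a zeroth-order term, so the two unknowns are entangled in a way that neither the pure-potential setup of \cite{LLLS201905} nor the pure-gradient setup of \cite{KU201909} resolves directly.

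To break the entanglement, I would insert corrected harmonic exponentials $v_k=e^{-ix\cdot\zeta_k/h}+r_k$ with $\zeta_k\in\mathbb C^n$, $\zeta_k\cdot\zeta_k=0$, and $\zeta_1+\zeta_2+\zeta_3=-\xi$ for a prescribed real covector $\xi$, the correctors $r_k$ being chosen so that each $v_k$ vanishes on a closed proper subset of $\partial\Omega$ disjoint from $\Gamma_1\cap\Gamma_2$, as in \cite{FKSjU09}. Sending $h\to 0$ and retaining leading contributions collapses the identity to a Fourier-side relation in which $\widehat{\widetilde A}(\xi)$ is contracted against a vector built from $\zeta_1,\zeta_2$, while $\widehat{\widetilde q}(\xi)$ appears as a scalar; varying $\zeta_1,\zeta_2$ on the null cone with fixed sum, together with a transport equation for the leading harmonic amplitudes, peels $\widetilde A$ from $\widetilde q$, and the boundary hypothesis $\nu\cdot\widetilde A=0$ on $\Gamma_1\cap\Gamma_2$ removes the residual divergence gauge. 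This yields local vanishing near the accessible boundary part. The local-to-global promotion is carried out via the $H^1$ Runge-type approximation of these CGO solutions by harmonic functions with trace supported in $\Gamma_1$ (respectively $\Gamma_2$), extending the vanishing of $\widetilde A$ and $\widetilde q$ to all of $\Omega$. The induction on $m$ then proceeds by the structural analogy described above. The hard part, and the one I expect to absorb the bulk of the technical work, is the simultaneous CGO-plus-transport decoupling of the magnetic and electric contributions in the partial-data regime.
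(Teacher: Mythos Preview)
Your overall architecture (higher-order linearization, the integral identity from \eqref{Q2}, induction on $m$, and a local-to-global step via $H^1$ Runge approximation) matches the paper. The divergence is in how you extract $\widetilde A$ and $\widetilde q$ from the three-harmonic-function identity, and here the paper takes a cleaner route than the one you outline.

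You propose to insert corrected exponentials into all three of $v_1,v_2,v_3$ simultaneously, pass to the Fourier side, and then decouple $\widehat{\widetilde A}$ from $\widehat{\widetilde q}$ by varying the null vectors. The paper instead applies the $L^1$-density result of \cite{FKSjU09} \emph{first}, to the pair $(v_1,v_2)$: since products $v_1v_2$ of harmonic functions vanishing on $\partial\Omega\setminus\Gamma_1$ are dense in $L^1(\Omega)$, the integral identity collapses to a \emph{pointwise} transport equation
\[
F(x)\cdot Dv_{m+1}(x)+g(x)\,v_{m+1}(x)=0\qquad\text{a.e.\ in }\Omega,
\]
valid for every harmonic $v_{m+1}$ with trace supported in $\Gamma_2$, where $F=-(m+1)\widetilde A$ and $g=-D\cdot\widetilde A+\widetilde q$ (an integration by parts, together with $\nu\cdot\widetilde A=0$ on $\Gamma_1\cap\Gamma_2$, puts the identity in this form). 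Only then is a \emph{single} corrected exponential $v_{m+1}=e^{-ix\cdot\zeta/h}+\widetilde w$ plugged in: sending $h\to 0$ yields $F(x)\cdot\zeta=0$ at every point near $\Gamma_2$, and running $\zeta$ over $i{\bf e}_1\pm{\bf e}_j$ recovers each component of $F$ locally; then $g=0$ follows immediately. The local-to-global promotion via Runge approximation is as you describe, but applied to this transport equation rather than to the original integral identity.

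This ``density first, then one CGO'' separation is precisely what bypasses the entanglement you flag as the hard part: there is no Fourier-side decoupling of magnetic and electric contributions to perform, and no need to control products of three corrector terms. Your three-CGO route is not obviously wrong, but the step ``varying $\zeta_1,\zeta_2$ on the null cone with fixed sum peels $\widetilde A$ from $\widetilde q$'' is where all the difficulty would concentrate, and in the partial-data setting the simultaneous corrector estimates would be delicate; the paper's approach avoids this entirely.
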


\begin{remark}
When $\Gamma_1=\Gamma_2=\partial\Omega$, this is the uniqueness result for the inverse problem with full boundary data. In particular, it can be showed by a separate and direct method as seen in the Appendix.   
\end{remark}

We comment here due to the assumption \eqref{Aq_conditions}, the first order linearization of the DN-map provides boundary measurements of the harmonic functions in $\Omega$. As commented in Remark \ref{rk:v_k}, we could adopt the argument in \cite{LLLS201905} to show the unique determination of obstacles embedded in $\Omega$ or the unknown geometry of the inaccessible part of the boundary.

Another important observation is that our result shows that there is no gauge invariance for this problem.

The paper is organized as follows. The higher order linearization technique is detailed in Section~\ref{sec:linear} and the crucial integral identity is also derived there. Then the proof of Theorem \ref{thm:global uniqueness} is given in Section \ref{sec:partial data}. 
The well-posedness for the boundary value problem of the nonlinear magnetic Schr\"odinger equation is established in Appendix~\ref{sec:pre}. 
Finally, an alternative proof of the uniqueness of potentials with full boundary measurements is provided in Appendix \ref{append:alter_pf}.

\section*{Acknowledgements}
R.-Y. Lai is partially supported by the NSF grant DMS-1714490. T. Zhou was working on the project during visiting the MSRI in participation of the 2019 semester program on Microlocal Analysis and would like to thank Prof. Gunther Uhlmann and Prof. Katya Krupchyk for helpful discussions. 
Both authors thank Prof. Francis Chung for his tremendous help with the draft and constructive advice.


\section{The higher order linearization}\label{sec:linear}
 
In this section, we use the higher order linearization approach to derive a key integral identity encoding the information of the discrepancy of the potentials $A$ and $q$, as stated in Proposition~\ref{HigherIntegralIdentity}. We start by considering the $m=2$ case and then extend it to the higher order terms by induction steps.

For $m\geq 2$, let $\varepsilon := (\varepsilon_1, \ldots, \varepsilon_m)$ with $\varepsilon_k>0$ and let $f_k\in W^{2-1/p,p}(\p\Omega)$ with $\supp(f_k)\subset\Gamma_1$, $k=1,\ldots, m$. 
Under the assumptions of Theorem \ref{thm:global uniqueness}, the boundary value problem
\begin{align}\label{eqn:MagnSch1}
\left\{\begin{array}{ll}
(D + A_j(x,u ))^2 u_j  + q_j(x,u_j ) = 0 & \mbox{ in } \Omega, \\
u_j  = \e_1 f_1 + \ldots + \e_m f_m & \mbox{ on } \partial \Omega,
\end{array}\right.
\end{align}
admits a unique solution $u_j=u_j(x;\varepsilon)$ for $|\varepsilon|$ small enough.

\subsection{For $m=2$ case}
We recall the condition \eqref{Aq_conditions}.
Given the boundary condition $f=\varepsilon_1 f_1+\varepsilon_2 f_2$ with $\supp(f_k)\subset\Gamma_1$ for small enough $|\varepsilon|$, following the steps described in the introduction, 
the first order linearization of \eqref{eqn:MagnSch1} around the zero solution $u_j(x;0)=0$ gives that $v_{j,k}:=\partial_{\varepsilon_k}u_j(x;\varepsilon)|_{\varepsilon=0}$, $k=1,2$, is harmonic function satisfying
\begin{equation}\label{eqn:1nd_Linearization}
-\Delta  v_{j,k}=0\quad\textrm{ in }\Omega,\qquad v_{j,k}|_{\partial\Omega}=f_k.
\end{equation} 
This indeed implies that 
$$
   v_k:=v_{1,k}=v_{2,k}\quad\textrm{ in }\Omega.
$$
Next we perform the second order linearization, then it gives that the function
$$w_j:=\partial_{\varepsilon_1}\partial_{\varepsilon_2}u_j(x;\varepsilon)|_{\varepsilon=0} $$ 
is the solution to
\begin{equation}\label{eqn:2nd_Linearization}
-\Delta w_j+Q^{(2)}(v_1, v_2) =0\quad\textrm{ in }\Omega,\qquad w_j|_{\partial\Omega}=0,
\end{equation}
where 
\begin{equation}\label{eqn:Q}Q^{(2)}(v_1, v_2):=3\partial_zA_j(x,0)\cdot \left(v_1 D v_2+v_2 Dv_1\right) + 2D_x\cdot \partial_zA_j(x, 0)v_1v_2+\p_z^2q_j(x,0)v_1 v_2,
\end{equation}
with the partial $D_x$ meaning the derivative with respect to the first variable of $A_j(x, u)$. 
Then the $O(\varepsilon_1\varepsilon_2)$ term in the expansion of the DN-map is 
\begin{equation}\label{eqn:DN_w}\partial_{\varepsilon_1}\partial_{\varepsilon_2}|_{\varepsilon=0}[\Lambda^{\Gamma_1, \Gamma_2}_{A_j,q_j}(\varepsilon_1f_1+\varepsilon_2f_2)]=\left(\p_\nu w_j+2i\nu\cdot\partial_zA_j(x,0)f_1f_2\right)|_{\Gamma_2}.\end{equation}  

We then have the integral identity in the $m=2$ case.

\begin{proposition}\label{lem:int_identity}
Let {$\Omega\subset \R^n,\ n\geq2$} be an open {connected} bounded domain with $C^\infty$ boundary $\p\Omega$ and
let $\Gamma_1,\Gamma_2\subset \p\Omega$ be arbitrary nonempty open subsets of $\p\Omega$.
Given two sets of potentials $(A_1, q_1)$ and $(A_2, q_2)$ that satisfy the conditions \eqref{A_holomorphic}-\eqref{Aq_conditions} and 
\[{\nu\cdot\p_zA_1(x,0)=\nu\cdot\p_zA_2(x,0)\qquad \textrm{ on }\Gamma_1\cap\Gamma_2},\]
we then have that if $\Lambda^{\Gamma_1,\Gamma_2}_{A_1,q_1}=\Lambda^{\Gamma_1, \Gamma_2}_{A_2,q_2}$ (for small boundary data), then for any harmonic functions $v_1, v_2, v_3$ with 
$$
\supp (v_1|_{\p\Omega}),\, \supp (v_2|_{\p\Omega})\subset\Gamma_1\ \hbox{ and }\ \supp (v_3|_{\p\Omega})\subset\Gamma_2,$$
we have
\begin{equation}\label{InteriorIntegral}\int_\Omega \left(Q_1^{(2)}(v_1, v_2)-Q^{(2)}_2(v_1, v_2)\right)v_3~dx=0,\end{equation}
where $Q_j^{(2)}(v_1, v_2)$ is given by \eqref{eqn:Q} with $A, q$ replaced by $A_j, q_j$ for $j=1,2$.
\end{proposition}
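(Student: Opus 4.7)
The plan is to carry out the second order linearization at the zero solution and to read off an integral identity by multiplying the resulting Poisson equation by a harmonic test function. Given three harmonic functions $v_1, v_2, v_3$ with the stated support conditions on their traces, set $f_k := v_k|_{\p\Omega}$ for $k=1,2,3$ and feed the two-parameter boundary data $\e_1 f_1 + \e_2 f_2$ into \eqref{eqn:MagnSch1}. By the well-posedness (Theorem on $W^{2,p}$ solvability) and the assumption \eqref{Aq_conditions}, the first order linearizations $\partial_{\varepsilon_k}u_j|_{\varepsilon=0}$ coincide with the harmonic extensions $v_k$ for $k=1,2$ (as recalled in Remark \ref{rk:v_k} and equation \eqref{eqn:1nd_Linearization}), and the second mixed derivative $w_j := \partial_{\varepsilon_1}\partial_{\varepsilon_2} u_j|_{\varepsilon=0}$ solves \eqref{eqn:2nd_Linearization} with zero Dirichlet data on $\p\Omega$.

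Next I would exploit the hypothesis $\Lambda^{\Gamma_1,\Gamma_2}_{A_1,q_1}=\Lambda^{\Gamma_1,\Gamma_2}_{A_2,q_2}$. Differentiating the identity $\Lambda^{\Gamma_1,\Gamma_2}_{A_1,q_1}(\e_1 f_1+\e_2 f_2)=\Lambda^{\Gamma_1,\Gamma_2}_{A_2,q_2}(\e_1 f_1+\e_2 f_2)$ twice in $\e_1,\e_2$ and evaluating at $\varepsilon = 0$, the expansion \eqref{eqn:DN_w} yields
\begin{equation*}
\bigl(\p_\nu w_1 + 2i\,\nu\cdot\p_z A_1(x,0)f_1 f_2\bigr)\big|_{\Gamma_2}=\bigl(\p_\nu w_2 + 2i\,\nu\cdot\p_z A_2(x,0)f_1 f_2\bigr)\big|_{\Gamma_2}.
\end{equation*}
Because $\supp(f_1), \supp(f_2)\subset \Gamma_1$, the product $f_1 f_2$ restricted to $\Gamma_2$ is supported in $\Gamma_1\cap\Gamma_2$, where by assumption $\nu\cdot\p_z A_1(x,0)=\nu\cdot\p_z A_2(x,0)$. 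The boundary terms containing $A_j$ therefore cancel and we conclude that $\p_\nu w_1 = \p_\nu w_2$ on $\Gamma_2$.

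The difference $w := w_1 - w_2$ then satisfies
\begin{equation*}
-\Delta w = Q^{(2)}_2(v_1,v_2) - Q^{(2)}_1(v_1,v_2)\ \textrm{in }\Omega,\qquad w|_{\p\Omega}=0,\qquad \p_\nu w|_{\Gamma_2}=0.
\end{equation*}
Multiplying by the harmonic function $v_3$ (whose trace is supported in $\Gamma_2$) and integrating by parts via Green's identity gives
\begin{equation*}
\int_\Omega \bigl(Q^{(2)}_2(v_1,v_2) - Q^{(2)}_1(v_1,v_2)\bigr) v_3 \, dx = \int_{\p\Omega}\bigl( w\,\p_\nu v_3 - v_3\,\p_\nu w\bigr)\, dS.
\end{equation*}
The first boundary term vanishes since $w|_{\p\Omega}=0$, and the second vanishes because $v_3|_{\p\Omega}$ is supported in $\Gamma_2$ while $\p_\nu w$ vanishes there; this yields \eqref{InteriorIntegral} up to a sign.

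The argument is essentially algebraic once the second order linearization has been correctly identified, so no deep analytic obstacle arises at this stage. The only delicate point is the cancellation of the explicit $A_j$-boundary contribution in the expansion of the DN-map: it forces the compatibility hypothesis $\nu\cdot \p_z A_1(x,0)=\nu\cdot\p_z A_2(x,0)$ on $\Gamma_1\cap\Gamma_2$, and one must verify that the supports line up precisely so that this single boundary identity suffices to kill the term on all of $\Gamma_2$. The real difficulties of the paper will begin afterwards, when one tries to use \eqref{InteriorIntegral} to separately recover $\p_z A_j(x,0)$ and $\p_z^2 q_j(x,0)$ from products of harmonic functions vanishing on parts of $\p\Omega$.
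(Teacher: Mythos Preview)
Your proof is correct and follows essentially the same approach as the paper: second order linearization to obtain $w_j$, cancellation of the $A_j$-boundary contributions on $\Gamma_2$ via the support condition and the hypothesis on $\Gamma_1\cap\Gamma_2$, then Green's identity against the harmonic test function $v_3$. The remark ``up to a sign'' is harmless since the claimed identity asserts the integral is zero.
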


\begin{proof}
Let $v_1$ and $v_2$ be harmonic functions with boundary conditions $f_k:=v_k|_{\p\Omega}$ and $\supp(f_k)\subset \Gamma_1$ for $k=1,2$. 
From the fact that $\Lambda^{\Gamma_1,\Gamma_2}_{A_1, q_1}(\varepsilon_1f_1+\varepsilon_2 f_2)=\Lambda^{\Gamma_1,\Gamma_2}_{A_2,q_2}(\varepsilon_1f_1+\varepsilon_2f_2)$ for small $\varepsilon=(\varepsilon_1, \varepsilon_2)$, we have that 
$$
\p_\nu w_1+2i\nu\cdot\partial_zA_1(x,0)f_1f_2=\p_\nu w_2+2i\nu\cdot\partial_zA_2(x,0)f_1f_2\qquad \textrm{ on }\Gamma_2,
$$
where $w_1, w_2$ are solutions to 
\begin{equation}\label{eqn:linear w}-\Delta w_j+Q^{(2)}_j(v_1, v_2)=0 \quad \textrm{ in }\Omega,\qquad w_j|_{\p\Omega}=0.\end{equation}
Since  $\supp(f_1),\supp(f_2)\subset \Gamma_1$ and $\nu\cdot\p_zA_1(x,0)=\nu\cdot\p_zA_2(x,0)$ on $\Gamma_1\cap\Gamma_2$, one has $$\nu\cdot\p_zA_1(x,0)f_1f_2=\nu\cdot\p_zA_2(x,0)f_1f_2 \qquad \hbox{ on $\Gamma_2$}, $$  which leads to 
$$
    \p_\nu w_1 |_{\Gamma_2} = \p_\nu w_2|_{\Gamma_2}.
$$
Multiplying \eqref{eqn:linear w} by any harmonic function $v_3$ in $\Omega$ with $\supp(v_3|_{\p\Omega})\subset \Gamma_2$ and applying Green's formula, we then derive that
\begin{align*} 
\int_\Omega \left(Q^{(2)}_1(v_1, v_2)-Q^{(2)}_2(v_1, v_2)\right)v_3 ~dx =\int_{\p\Omega\setminus \Gamma_2} (\p_\nu w_1-\p_\nu w_2)v_3~dS=0.
\end{align*}
Thus, the proof is complete.
\end{proof}

\subsection{Induction steps in $m\geq2$.}	
Let $m\geq 2$ and suppose that
$$
\p_z^kA_1(x,0)=\p_z^kA_2(x,0)\ \ \hbox{for } k=1,\ldots, m-2,
$$
$$
\p_z^kq_1(x,0)=\p_z^k q_2(x,0)\ \ \hbox{for } k=2,\ldots, m-1.
$$
Combining the base case \eqref{Aq_conditions}, we have that 
\begin{equation}\label{AAssumption}
\partial_z^k A_1(x,0) = \partial_z^k A_2(x,0) \mbox{ for } k = 0, \ldots, m-2,
\end{equation}
and
\begin{equation}\label{QAssumption}
\partial_z^k q_1(x,0) = \partial_z^k q_2(x,0) \mbox{ for } k = 0, \ldots, m-1.
\end{equation}
Let $\varepsilon=(\varepsilon_1,\ldots, \varepsilon_m)$ with small enough $\varepsilon_k>0$ and let $f_k\in W^{2-1/p,p}(\p\Omega)$ with $\supp(f_k)\subset \Gamma_1$ for $k=1,\ldots,m$. 
Again, from Theorem~\ref{Thm:well-posedness}, there exists a unique small solution $u_j = u_j(x;\e)$ to the problem	
\begin{align}\label{EpsilonmEquation}
\left\{\begin{array}{ll}
    (D + A_j(x,u_j))^2 u_j + q_j(x,u_j) = 0 & \mbox{ in } \Omega, \\
    u_j = \e_1 f_1 + \ldots + \e_m f_m & \mbox{ on } \partial \Omega,
\end{array}\right.
\end{align}
for $j = 1,2$.

Generally, for any positive integer $m\geq 2$, we define the function $Q_j^m(v_1, \ldots, v_m)$ by
\begin{align}\label{def Q}
\begin{split}
Q_j^{(m)}(v_1, \ldots, v_m)  
&:=(m+1)\p_z^{m-1}A_j(x,0)\cdot D(v_1\ldots v_m)+m(D_x\cdot \p_z^{m-1}A_j(x,0))v_1\ldots v_m\\
&\quad +\p_z^mq_j(x,0)v_1\ldots v_m.
\end{split}
\end{align}

The general integral identity is summarized in the following proposition:
\begin{proposition}\label{HigherIntegralIdentity}
Let $(A_1, q_1)$ and $(A_2, q_2)$ satisfy the conditions \eqref{A_holomorphic}-\eqref{Aq_conditions}.
Moreover, suppose for $m\geq 2$, \eqref{AAssumption} and \eqref{QAssumption} are satisfied and
\begin{equation}
    \label{A_bdry_cond}\nu\cdot \p_z^{m-1} A_1(x,0)=\nu\cdot \p_z^{m-1} A_2(x,0) \qquad \textrm{ on }\Gamma_1\cap\Gamma_2
\end{equation}
holds. 
If $\Lambda_{A_1,q_1}^{\Gamma_1,\Gamma_2}=\Lambda_{A_2,q_2}^{\Gamma_1,\Gamma_2}$ for small data, then for any harmonic functions $v_1, \ldots, v_{m+1}$ satisfying $$\supp (v_1|_{\p\Omega}), \ldots,\ \supp (v_m|_{\p\Omega})\subset \Gamma_1\ \hbox{ and }\ \supp (v_{m+1}|_{\p\Omega})\subset \Gamma_2,$$ we have
\begin{align}\label{keyId}
\int_{\Omega} \left(Q_1^{(m)}(v_1, \ldots, v_m) - Q_2^{(m)}(v_1, \ldots, v_m)\right) v_{m+1} \, dx = 0.
\end{align}
\end{proposition}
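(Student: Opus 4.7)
The plan is to mimic the second-order argument of Proposition~\ref{lem:int_identity} at the $m$-th order. Introduce
\[
W_j^{(m)} := \partial_{\e_1}\cdots\partial_{\e_m} u_j\big|_{\e=0},\qquad j=1,2,
\]
which vanishes on $\partial\Omega$ for $m\ge 2$ since the Dirichlet data $\e_1 f_1+\cdots+\e_m f_m$ is linear in $\e$. The bulk of the work is to derive the PDE satisfied by $W_j^{(m)}$ and then to compute the jump of its Neumann trace from the assumption $\Lambda^{\Gamma_1,\Gamma_2}_{A_1,q_1}=\Lambda^{\Gamma_1,\Gamma_2}_{A_2,q_2}$.

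First, I would expand $(D+A_j(x,u_j))^2 u_j+q_j(x,u_j)=0$ via the chain rule into
\[
\begin{aligned}
&-\Delta u_j + 2A_j(x,u_j)\cdot Du_j - i(\nabla_x\cdot A_j)(x,u_j)\,u_j \\
&\qquad - i u_j\,\partial_z A_j(x,u_j)\cdot\nabla u_j + |A_j(x,u_j)|^2 u_j + q_j(x,u_j)=0,
\end{aligned}
\]
and then apply $\partial_{\e_1}\cdots\partial_{\e_m}|_{\e=0}$. Using the holomorphic expansions of $A_j,q_j$ and the vanishing condition \eqref{Aq_conditions}, each surviving monomial carries some $\partial_z^k A_j$ or $\partial_z^k q_j$, and counting degrees in $\e$ forces $k\le m-1$ for $A_j$ and $k\le m$ for $q_j$. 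Under \eqref{AAssumption}--\eqref{QAssumption}, every such monomial with $k\le m-2$ for $A_j$ or $k\le m-1$ for $q_j$ produces identical contributions for $j=1,2$; the intermediate mixed derivatives $\partial_{\e_{k_1}}\cdots\partial_{\e_{k_l}} u_j|_{\e=0}$ for $l<m$ also agree between $j=1,2$ by the same reasoning (applied inductively on $l$). The only $j$-dependent pieces must therefore carry $\partial_z^{m-1}A_j$ or $\partial_z^m q_j$; collecting them via the combinatorial expansion of products of $u_j$, I expect precisely
\[
-\Delta W_j^{(m)} + Q_j^{(m)}(v_1,\ldots,v_m) + R^{(m)} = 0 \quad\text{in}\ \Omega, \qquad W_j^{(m)}|_{\partial\Omega}=0,
\]
with $R^{(m)}$ independent of $j$. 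The coefficient $(m+1)$ in $Q_j^{(m)}$ should arise as $2$ from $2A_j\cdot Du_j$ plus $(m-1)$ from the chain-rule term $-iu_j\,\partial_z A_j(x,u_j)\cdot\nabla u_j$; the coefficient $m$ in front of the divergence term from the multinomial count for $u_j^m$; and $\partial_z^m q_j$ from the Taylor expansion of $q_j$.

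Next I would apply $\partial_{\e_1}\cdots\partial_{\e_m}|_{\e=0}$ to $\Lambda_{A_1,q_1}^{\Gamma_1,\Gamma_2}(f)=\Lambda_{A_2,q_2}^{\Gamma_1,\Gamma_2}(f)$ with $f=\e_1 f_1+\cdots+\e_m f_m$. The $\nu\cdot\nabla u_j$ part yields $\partial_\nu W_j^{(m)}|_{\Gamma_2}$, while the magnetic boundary correction $i\nu\cdot A_j(x,u_j)u_j|_{\Gamma_2}$ yields, by the same expansion restricted to $\partial\Omega$, a term proportional to $i m\,\nu\cdot\partial_z^{m-1}A_j(x,0)\,v_1\cdots v_m|_{\Gamma_2}$ plus lower-order contributions matching for $j=1,2$. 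The assumption \eqref{A_bdry_cond} together with $\supp(f_k)\subset\Gamma_1$ cancels this leading correction on $\Gamma_2$, giving $\partial_\nu(W_1^{(m)}-W_2^{(m)})|_{\Gamma_2}=0$. Finally I would multiply $-\Delta(W_1^{(m)}-W_2^{(m)})+(Q_1^{(m)}-Q_2^{(m)})=0$ by an arbitrary harmonic $v_{m+1}$ with $\supp(v_{m+1}|_{\partial\Omega})\subset\Gamma_2$ and integrate by parts: the interior $-\Delta v_{m+1}$ term vanishes by harmonicity, one boundary term vanishes since $W_1^{(m)}-W_2^{(m)}=0$ on $\partial\Omega$, and the other is supported in $\Gamma_2$ where the Neumann jump vanishes, yielding \eqref{keyId}.

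The main obstacle I foresee is the combinatorial bookkeeping in deriving the PDE for $W_j^{(m)}$: enumerating all monomials that survive after $m$ mixed differentiations, confirming the exact coefficients $(m+1)$ and $m$ in $Q_j^{(m)}$, and verifying that every remaining piece cancels between $j=1,2$ using \eqref{AAssumption}--\eqref{QAssumption} together with the induction on intermediate linearizations. Once the PDE and the boundary identity are in place, the Green's formula step is a direct generalization of the $m=2$ argument.
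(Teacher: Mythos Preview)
Your proposal is correct and follows essentially the same route as the paper. The paper isolates the claim that the intermediate mixed derivatives $E_k u_1 = E_k u_2$ for $1\le k<m$ into a separate lemma (Lemma~\ref{EjuLemma}) proved by induction on $k$, whereas you handle this inline; otherwise the derivation of the equation for $W_j^{(m)}$, the cancellation of the $j$-independent remainder, the use of \eqref{A_bdry_cond} to kill the boundary correction on $\Gamma_2$, and the final Green's identity all match the paper's argument, and your combinatorial explanation of the coefficients $(m+1)$ and $m$ is accurate.
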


Before proving Proposition~\ref{HigherIntegralIdentity}, we first need to look more closely the derivative of $u_j$ with respect to $\varepsilon$, which is stated in Lemma~\ref{EjuLemma}.

We start with defining the notation $E_k$ for $1\leq k\leq m$ to be a product of $k$ distinct operators of the form $\partial_{\e_i}$ and setting $\e=0$. For example, for distinct numbers $1\leq \ell_1,\ell_2,\ldots,\ell_k\leq m$, we have that $\p_{\e_{\ell_1}}\p_{\varepsilon_{\ell_2}}\ldots\p_{\varepsilon_{\ell_k}}u|_{\e =0}  $ is a representative of $E_ku$.

\begin{lemma}\label{EjuLemma} 
Suppose that \eqref{AAssumption} and \eqref{QAssumption} hold.  
Let $u_j$ be the solution to \eqref{EpsilonmEquation}. For any $1 \leq k < m$,
then we have 
\[
E_k u_1 = E_k u_2.
\]
\end{lemma}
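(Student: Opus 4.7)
The plan is to induct on $k$. The base case $k=1$ is already in hand from \eqref{eqn:1nd_Linearization}: each representative of $E_1 u_j$ is of the form $v_{j,i}=\partial_{\varepsilon_i}u_j|_{\varepsilon=0}$, a harmonic function with Dirichlet datum $f_i$, and uniqueness gives $v_{1,i}=v_{2,i}$. For the inductive step, I fix $2\leq k<m$ and assume $E_\ell u_1=E_\ell u_2$ for every $1\leq\ell<k$.

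I will apply $E_k=\partial_{\varepsilon_{\ell_1}}\cdots\partial_{\varepsilon_{\ell_k}}|_{\varepsilon=0}$, for arbitrary distinct indices $\ell_i$, to the PDE in \eqref{EpsilonmEquation} after expanding
\begin{equation*}
(D+A_j(x,u_j))^2 u_j = -\Delta u_j - i(\nabla\cdot A_j(x, u_j))\,u_j - 2i A_j(x, u_j)\cdot\nabla u_j + |A_j(x, u_j)|^2 u_j.
\end{equation*}
Each composition $A_j(x,u_j)$ or $q_j(x,u_j)$ is expanded by Fa\`a di Bruno's formula, and each product by the Leibniz rule. Using $A_j(x,0)=0$, $q_j(x,0)=\partial_z q_j(x,0)=0$, and $u_j|_{\varepsilon=0}=0$, many contributions collapse, and what remains reads
\begin{equation*}
-\Delta(E_k u_j) + F_j = 0\quad\text{in }\Omega, \qquad E_k u_j = 0\quad\text{on }\partial\Omega,
\end{equation*}
where $F_j$ is a polynomial expression in $\{E_\ell u_j : \ell<k\}$ and in the coefficients $\partial_z^\alpha A_j(x,0)$, $\partial_z^\alpha q_j(x,0)$. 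The zero Dirichlet condition comes from $k\geq 2$ annihilating the affine boundary datum $\sum_l \varepsilon_l f_l$.

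The crucial step is a degree count bounding the orders $\alpha$ that can appear in $F_j$. In every surviving product containing an $A_j$-factor, namely $(\nabla\cdot A_j)u_j$, $A_j\cdot\nabla u_j$, and $|A_j|^2 u_j$, each factor vanishes at $\varepsilon=0$ (either because $u_j=0$ or because $A_j(x,0)=0$) unless it receives at least one $\varepsilon_{\ell_i}$-derivative. Hence any nonzero Leibniz term distributes the $k$ derivatives across all factors with at least one derivative per factor, so at most $k-1$ derivatives can accumulate on a single $A_j$. Consequently $F_j$ only involves $\partial_z^\alpha A_j(x,0)$ with $\alpha\leq k-1\leq m-2$ and $\partial_z^\alpha q_j(x,0)$ with $\alpha\leq k\leq m-1$. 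By \eqref{AAssumption} and \eqref{QAssumption} all such coefficients agree for $j=1,2$; together with the induction hypothesis this gives $F_1=F_2$. Thus $E_k u_1$ and $E_k u_2$ solve the same Poisson problem with identical Dirichlet data, and uniqueness for the Laplacian yields $E_k u_1=E_k u_2$, closing the induction.

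The main obstacle I foresee is purely combinatorial bookkeeping for the Fa\`a di Bruno--Leibniz expansion, in particular verifying that no coefficient $\partial_z^k A_j(x,0)$ can slip into $F_j$ (whereas $\partial_z^k q_j(x,0)$ does appear and is safely covered by \eqref{QAssumption}). No substantive analytic difficulty should arise beyond this accounting and the standard uniqueness for the linear Dirichlet problem.
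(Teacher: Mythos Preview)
Your proposal is correct and follows essentially the same route as the paper's proof: induction on $k$, with the base case given by the first-order linearization, and the inductive step obtained by applying $E_k$ to the equation to produce a Poisson problem $-\Delta(E_k u_j)=-\Psi_k$ whose forcing depends only on $E_\ell u_j$ for $\ell<k$, on $\partial_z^\alpha A_j(x,0)$ with $\alpha\le k-1$, and on $\partial_z^\alpha q_j(x,0)$ with $\alpha\le k$, so that \eqref{AAssumption}, \eqref{QAssumption}, and the induction hypothesis force the two forcings to agree. Your Leibniz/Fa\`a di Bruno accounting (``each factor must receive at least one derivative, hence at most $k-1$ land on $A_j$'') is exactly the mechanism behind the paper's stated degree bounds, just spelled out in more detail.
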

 
\begin{proof}
To demonstrate this for $k = 1$, we apply $\partial_{\e_i}$, $1\leq i\leq m$, to the equation \eqref{EpsilonmEquation} for $u_j$ and set $\e = 0$.  As before we find $\partial_{\e_i}u_j|_{\e = 0}$ satisfies the linear equation
\[
\Delta \partial_{\e_i} u_j|_{\e =0} = 0, 
\]
with boundary condition $(\p_{\e_i}u_j|_{\e=0})|_{\p\Omega} = f_i$.  Since this holds for $j = 1,2$, we conclude that $E_1 u_1 = E_1 u_2$.  

Now we proceed by the induction argument. Suppose that
\begin{equation}\label{EiuAssumption}
E_i u_1 = E_i u_2
\end{equation}
holds for any $E_i$ with $1 \leq i < k$, and we want to show $E_k u_1=E_ku_2$. Without loss of generality, we consider the operator
\[
E_k = \partial_{\e_1} \ldots \partial_{\e_k}|_{\e=0}. 
\]
By applying $E_k$ to \eqref{EpsilonmEquation}, we have 
\[0=-\Delta E_ku_j+\Psi_k(u_j, A_j, q_j),\]
where the term $\Psi_k$ is defined by
\begin{align*}
 & \Psi_k(u_j, A_j, q_j)\\
 &:=\Psi_k(E_1u_j,\ldots, E_{k-1}u_j, \p_z^1A_j(x,0),\ldots, \p_z^{k-1}A_j(x,0), \p_z^2 q_j(x,0), \ldots, \p_z^kq_j(x,0))
\end{align*}  
and contains derivatives of order at most $k-1$ in $u_j$, derivatives of order at most $k-1$ in $A_j$, and derivatives of order at most $k$ in $q_j$ with respect to the variable $z$.  
Since $k < m$, we have $k \leq m-1$ and $k - 1 \leq m-2$. Therefore combining \eqref{AAssumption}, \eqref{QAssumption}, and \eqref{EiuAssumption}, we have 
\[
\Psi_k(u_1, A_1,q_1) = \Psi_k(u_2, A_2,q_2).
\]
Therefore the conclusion is that 
\[
-\Delta (E_k u_1) = -\Delta (E_k u_2).
\]
Moreover, $E_k u_1$ and $E_k u_2$ share the same boundary condition, then one can conclude
\[
E_k u_1 = E_k u_2\]
as desired. 

\end{proof}

\begin{proof}[Proof of Proposition \ref{HigherIntegralIdentity}]

For the given harmonic functions $v_1,\ldots, v_m$, whose boundary traces are supported on $\Gamma_1$, we consider the solution $u_j$ to \eqref{EpsilonmEquation} with $f_k=v_k|_{\p\Omega}$ ($k=1,\ldots,m$). It is not hard to see by Lemma \ref{EjuLemma} that $\p_{\e_k}u_1|_{\e=0}=\p_{\e_k} u_2|_{\e=0}=v_k$.

Applying the operator $\partial_{\e_1} \ldots \partial_{\e_m}|_{\e=0}$ to \eqref{EpsilonmEquation}, we get
\begin{align*}
0 &=  -\Delta (\partial_{\e_1} \ldots \partial_{\e_m} u_j|_{\e=0}) \\
 &\quad +(m+1)\p_z^{m-1}A_j(x,0)\cdot D(v_1\ldots v_m)+m(D_x\cdot \p_z^{m-1}A_j(x,0))v_1\ldots v_m\\
 &\quad +\p_z^mq_j(x,0)v_1\ldots v_m+R_m(u_j, A_j, q_j).
\end{align*}
Here the remaining term $R_m$ contains derivatives of order at most $m-1$ in $u_j$, at most $m-2$ in $A_j(x,0)$, and at most $m-1$ in $q_j(x,0)$ with respect to $z$ variable.  For $j=1,2$, if we write $$\phi_j:=\partial_{\e_1} \ldots \partial_{\e_m} u_j|_{\e=0}$$ and use
the notation introduced in \eqref{def Q}, then
we can write this as 
\[
\Delta \phi_j = Q^{(m)}_j(v_1, \ldots, v_m)  +R_m(u_j, A_j, q_j) \quad \textrm{ in }\Omega,\qquad \phi_j|_{\p\Omega}=0.
\]
From Lemma \ref{EjuLemma} ($E_\ell u_1=E_\ell u_2$, $1\leq \ell < m$) and the assumptions on $A_j$ and $q_j$, we see that 
\[
R_m(u_1, A_1, q_1) = R_m(u_2, A_2, q_2).
\]
Therefore if we subtract the equation for $j = 2$ from the equation for $j = 1$, we get
\[
\Delta (\phi_1 - \phi_2) = Q^{(m)}_1(v_1, \ldots, v_m) - Q^{(m)}_2(v_1, \ldots, v_m) .
\]
From the equality of the DN-maps $\Lambda_{A_1,q_1}^{\Gamma_1,\Gamma_2}=\Lambda_{A_2,q_2}^{\Gamma_1,\Gamma_2}$, Lemma~\ref{EjuLemma} ($E_ku_1=E_ku_2$, $1\leq k< m$), and the condition \eqref{A_bdry_cond}, we can easily derive 
\begin{equation}\label{w_bdry_gamma_2}\p_\nu \phi_1|_{\Gamma_2}=\p_\nu \phi_2|_{\Gamma_2}.\end{equation}
Now let $v_{m+1}$ be harmonic and $\supp (v_{m+1}|_{\p\Omega})\subset\Gamma_2$, and consider the integral
\[
\int_{\Omega} \Delta (\phi_1 - \phi_2)v_{m+1}~dx. 
\]
Similar to the case $m=2$ discussed in the proof of Proposition~\ref{lem:int_identity}, by performing the integration by parts, we get 
\[
\int_{\Omega} \Delta (\phi_1 - \phi_2)v_{m+1} \, dx= \int_{\Omega} (\phi_1 - \phi_2)\Delta v_{m+1}~dx=0,
\]
with no boundary terms, thanks to the equality \eqref{w_bdry_gamma_2}, $\phi_1|_{\p\Omega}=\phi_2|_{\p\Omega}=0$ and $\supp (v_{m+1}|_{\p\Omega})\subset\Gamma_2$. This gives us 
\[
\int_{\Omega} \left(Q^{(m)}_1(v_1, \ldots, v_m) - Q^{(m)}_2(v_1, \ldots, v_m) \right) v_{m+1}~dx = 0.
\]
This finishes the proof.
\end{proof}

From Proposition~\ref{lem:int_identity} and Proposition~\ref{HigherIntegralIdentity}, we have proved that the integral identity \eqref{keyId} holds for $m\geq 2$. In the next section, we will focus on extracting the information of potentials $A$ and $q$ from this identity.

\section{Proof of Theorem~\ref{thm:global uniqueness}}\label{sec:partial data}

\subsection{A key lemma}
We will see below that by using the integral identity \eqref{keyId} in Proposition~\ref{HigherIntegralIdentity} and the density result in Theorem~1.1 in \cite{FKSjU09}, we can derive a much simpler identity that will be the key component to show the desired uniqueness result.

To this end, we first simplify the notations by denoting the discrepancy in $\p_z^{m-1} A_1$ and $\p_z^{m-1} A_2$ as
$$\widetilde{A}_{m-1} (x):= \p_z^{m-1} A_2(x,0) - \p_z^{m-1}A_1 (x,0),
$$
and also denoting the discrepancy in $\p_z^{m} q_1$ and $\p_z^{m} q_2$ as
$$
\widetilde{q}_m(x) := \p_z^m q_2(x,0) - \p_z^m q_1 (x,0).
$$

By applying the integration by parts and the boundary condition $\nu\cdot \p_z^{m-1}A_1(x,0)=\nu\cdot \p_z^{m-1} A_2(x,0)$, the identity in Proposition~ \ref{HigherIntegralIdentity} now becomes
\[\begin{split} 0 
&=\int_\Omega \LC Q_2^{(m)}(v_1,\ldots, v_m)-Q_1^{(m)}(v_1,\ldots, v_m)\RC 	v_{m+1}~dx\\
&=\int_\Omega \Big(-(D\cdot  \widetilde A_{m-1})  v_{m+1}-(m+1) ( \widetilde A_{m-1}\cdot Dv_{m+1})  +\widetilde q_m v_{m+1}\Big)	 v_1\ldots v_{m }~dx
\end{split}
\] 
for harmonic functions $v_1, \ldots, v_{m+1}$ such that $$\supp (v_1|_{\p\Omega}), \ldots, \supp (v_m|_{\p\Omega})\subset \Gamma_1\quad \hbox{ and }\quad \supp (v_{m+1}|_{\p\Omega})\subset \Gamma_2.$$ 

We then have the following result by using the density of the product of harmonic functions in $L^1$ space.
\begin{lemma}\label{lemma transport}
Suppose that the conditions in Proposition~\ref{HigherIntegralIdentity} hold and harmonic functions $v_3,\ldots,v_{m}$ have nontrivial boundary data. Then the following identity holds:
	 \begin{equation}\label{eqn:v_mplus1}
	-(D\cdot \widetilde A_{m-1})  v_{m+1}-(m+1) ( \widetilde A_{m-1}\cdot Dv_{m+1})  +\widetilde q_mv_{m+1}=0
	 \end{equation}
	 almost everywhere \textup{(}a.e.\textup{)} in $\Omega$ for $m\geq 2$.  
\end{lemma}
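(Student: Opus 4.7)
The plan is to apply the density of products of two harmonic functions (Theorem~1.1 of \cite{FKSjU09}) directly to the integral identity displayed immediately before the lemma statement. Fix $v_3,\ldots,v_m$ (harmonic with nontrivial boundary traces supported in $\Gamma_1$) and fix any harmonic $v_{m+1}$ with boundary trace supported in $\Gamma_2$. Introduce the shorthand
\begin{equation*}
G_{m+1}(x) := -(D\cdot \widetilde A_{m-1})(x)\,v_{m+1}(x) - (m+1)\,\widetilde A_{m-1}(x)\cdot D v_{m+1}(x) + \widetilde q_m(x)\,v_{m+1}(x),
\end{equation*}
so that the preceding identity becomes
\begin{equation*}
\int_\Omega \bigl(v_3(x)\cdots v_m(x)\,G_{m+1}(x)\bigr)\, v_1(x)\,v_2(x)\, dx = 0
\end{equation*}
for every pair of harmonic functions $v_1, v_2$ whose boundary traces are supported in $\Gamma_1$.

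The next step is to invoke the \cite{FKSjU09} density: the set of such products $v_1 v_2$ is dense in $L^1(\Omega)$. Since $\widetilde A_{m-1}\in W^{1,\infty}(\Omega)$ and $\widetilde q_m\in L^\infty(\Omega)$, and since $v_3,\ldots,v_{m+1}$ are smooth (hence locally bounded) inside $\Omega$, the function $v_3\cdots v_m\,G_{m+1}$ lies in the appropriate dual space. Testing against the dense family therefore forces
\begin{equation*}
v_3(x)\cdots v_m(x)\,G_{m+1}(x) = 0 \qquad \text{for a.e. } x \in \Omega.
\end{equation*}

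To strip off the auxiliary factors, I would use that each $v_k$, $k=3,\ldots,m$, is a nontrivial harmonic function on $\Omega$ (its boundary data is nonzero, and unique continuation rules out it vanishing on an open set). The zero set of a nonzero harmonic (and hence real-analytic) function in $\mathbb R^n$ has Lebesgue measure zero, so the product $v_3\cdots v_m$ is nonzero a.e. in $\Omega$. Dividing out yields $G_{m+1}(x)=0$ a.e., which is precisely the identity claimed in Lemma~\ref{lemma transport}.

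The main obstacle I expect is the functional-analytic matching when applying the density theorem: one must confirm that the candidate integrand $v_3\cdots v_m\,G_{m+1}$ genuinely belongs to the space dual to where the products $v_1 v_2$ are dense. Minor care is needed because $v_{m+1}$ is only as regular as its boundary trace allows; this is handled by working with boundary data regular enough that interior harmonic-function regularity gives $v_{m+1}\in C^\infty(\Omega)$ and $Dv_{m+1}\in L^\infty_{\mathrm{loc}}(\Omega)$, which suffices given the a.e. conclusion. Everything else---the induction-based reduction to the discrepancies $\widetilde A_{m-1}$ and $\widetilde q_m$, and the boundary condition on $\nu\cdot \partial_z^{m-1}A_j$ killing the boundary term in the integration by parts---has already been arranged in deriving the identity above the lemma.
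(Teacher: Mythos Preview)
Your proposal is correct and follows essentially the same route as the paper: apply the $L^1$-density of products $v_1v_2$ from \cite{FKSjU09} (with $\widetilde\Gamma=\partial\Omega\setminus\Gamma_1$) to conclude that $v_3\cdots v_m\,G_{m+1}=0$ a.e., then remove the factors $v_3,\ldots,v_m$ using that the zero set of a nontrivial harmonic function has Lebesgue measure zero. The paper separates the cases $m=2$ and $m>2$ only for exposition; your unified treatment is the same argument.
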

\begin{proof}
When $m=2$, we apply Theorem~1.1 in \cite{FKSjU09}, stating that the set of products $v_1v_2$ of harmonic functions in $C^\infty(\overline\Omega)$ that vanish on a closed proper subset $\widetilde\Gamma$ of $\p\Omega$ is dense in $L^1(\Omega)$. We immediately obtain
\[-(D\cdot \widetilde A_1)v_3-3( \widetilde A_1\cdot Dv_3)+\widetilde q_2 v_3=0\]
a.e. in $\Omega$.

{For $m>2$, we also apply Theorem 1.1 in \cite{FKSjU09} to obtain 
\[\left(-(D\cdot  \widetilde A_{m-1} )  v_{m+1}-(m+1) ( \widetilde A_{m-1} \cdot Dv_{m+1})  + \widetilde q_mv_{m+1}\right)v_3\ldots v_m=0\]
a.e. in $\Omega$. 
By the fact that $\bigcup_{j=3}^m v_j^{-1}(0)$ has measure zero, we then have \eqref{eqn:v_mplus1} for any $m>2$.
}
	\end{proof}

For simplicity of notations, for $m\geq 2$, we recast \eqref{eqn:v_mplus1} as the following transport equation
\begin{align}\label{Fg_ID}
F(x)\cdot Dv_{m+1}+g(x)v_{m+1}=0,
\end{align}
where 
\begin{align}\label{Fg}
F(x):= -(m+1)\widetilde A_{m-1} (x),\ \ g(x):=-(D\cdot  \widetilde A_{m-1} )(x) +  \widetilde q_m(x).
\end{align}

\subsection{Uniqueness result} 
The proof of Theorem~\ref{thm:global uniqueness} heavily stands on the following result, that is, Proposition~\ref{prop:Global result} below: if the equation
\[F(x)\cdot Dv_{m+1}+g(x)v_{m+1}=0\qquad \hbox{ in }\Omega\]
holds for all harmonic functions $v_{m+1}\in C^\infty(\overline{\Omega})$ with $\supp (v_{m+1}|_{\p\Omega})\subset \Gamma_2\subset\p\Omega$, then $F=0$ and $g=0$ in $\Omega$.
It is clear that the identity $F=0=g$ implies $\widetilde A_{m-1}=0= \widetilde q_m$. 
Thus, the uniqueness of the potentials follows immediately.

The key strategy is to first	 show that $F$ and $g$ vanish locally, that is, $F(x)$ and $g(x)$ vanish a.e. in a neighborhood of a point $x_0\in \Gamma_2$. Next we extend this local result to the global one. The detailed argument is stated in the proof of Proposition~\ref{prop:Global result}. 

To begin, we first construct the harmonic function $v_{m+1}$ as in \cite{FKSjU09}. Without loss of generality, we let $x_0=0$, the tangent plane to $\p\Omega$ at $x_0$ be given by $x_1=0$ and
\[\Omega\subset\{x\in \R^n:~|x+{\bf e}_1|<1\},\quad \widetilde\Gamma_2:=\p\Omega\backslash\Gamma_2=\{x\in\p\Omega: x_1\leq -2c\}\] 
for some constant $c>0$. Here 
$$
{\bf e}_j =(0,\ldots,0,1,0,\ldots,0)
$$ with the $j^{th}$ component equals to $1$.

\begin{remark}
We comment here that one can apply a transformation to achieve above conditions for the domain  (or by the conformal mapping in \cite{FKSjU09} for non-convex domains). More specifically, given the transformation $T: \widetilde\Omega \rightarrow \Omega$, $x=T(y)$, the transport equation 
\[F(x)\cdot Dv+g(x) v=0\qquad\textrm{ in }\;\Omega\]
becomes
\[\widetilde{F}(y)\cdot \LC{\p y\over \p x}\RC^{T}D_y\tilde v+\tilde{g}	(y)\tilde v=0\qquad\textrm{ in }\;\tilde\Omega,\]
where  
\[\tilde v(y)= v\circ T(y),\, \widetilde F(y)= F\circ T(y),\,\tilde g(y)= g\circ T(y).\]
It is not hard to see that with the transformation satisfying $\det\LC{\p y\over \p x}\RC\neq 0$ a.e., we have that $\LC{\p y\over \p x}\RC^{T}\widetilde F(y)=0$ and $\tilde g(y)=0$ in $\widetilde\Omega$ implies that $F(x)=0$ and $g(x)=0$ in $\Omega$.
\end{remark}

For $\zeta\in\C^n$ such that $\zeta\cdot\zeta=0$ and a cut-off function $\chi\in C_0^\infty(\R^n)$ such that $\chi=1$ on $\widetilde\Gamma_2$ and $\supp(\chi)\subset \{x\in\R^n:~x_1\leq -c\}$, we consider the harmonic function 
\begin{align}\label{fun vm+1}
    v_{m+1}(x,\zeta)=e^{-ix\cdot\zeta/h}+\widetilde w(x,\zeta),\qquad h>0,
\end{align}
with $v_{m+1}|_{\widetilde{\Gamma}_2}=0$, where $\widetilde w$ is the solution to the Dirichlet problem
\[\left\{\begin{split}&\Delta \widetilde w=0\qquad\textrm{in }\Omega,\\& \widetilde w|_{\p\Omega}=-\left(e^{-ix\cdot\zeta/h}\chi\right)|_{\p\Omega}.\end{split}\right.\]
Then it is clear to see
\[\|\widetilde w\|_{H^1(\Omega)}\leq C\|e^{-ix\cdot\zeta/h}\chi\|_{H^{1/2}(\p\Omega)}\leq C(1+h^{-1}|\zeta|)^{1/2}e^{\frac1h H_K(\operatorname{Im}\zeta)},\]
where $H_K$ is the supporting function of the compact subset $K=\supp(\chi)\cap\p\Omega$ of the boundary and is defined by
\[H_K(\vec d)=\sup_{x\in K}x\cdot\vec d,\qquad \textrm{ for }\vec d\in \R^n.\]
In particular, from the property of $\chi$, one can further derive that when $\operatorname{Im}\zeta_1\geq0$,
\[\|\widetilde w\|_{H^1(\Omega)}\leq C(1+h^{-1}|\zeta|)^{1/2}e^{-\frac c h \operatorname{Im}\zeta_1}e^{\frac1h|\operatorname{Im}\zeta'|},\]
where $\zeta=(\zeta_1, \zeta')$ with $\zeta'$ being the $(n-1)$ dimensional coordinate vector. 

Similarly, we can also derive that for any $\alpha\in \Z^n_+$, the remainder function $\widetilde{w}$ satisfies
\[\begin{split}\|\p^\alpha\widetilde w\|_{H^1(\Omega)}&\leq C\|\p^\alpha (e^{-ix\cdot\zeta/h}\chi)|_{\p\Omega}\|_{H^{1/2}(\p\Omega)}\\
&\leq C(1+h^{-1}|\zeta|)^{(1+|\alpha|)/2}e^{-\frac c h \operatorname{Im}\zeta_1}e^{\frac1h|\operatorname{Im}\zeta'|},
\end{split}\]
which gives the upper bound of $\widetilde{w}\in H^{|\alpha|+1}(\Omega)$. By the Sobolev embedding theorem \cite{Evan}, when $|\alpha|-[\frac n 2]-1\geq 0$,  
one has
\begin{align}\label{diff w}
 \|\widetilde w\|_{C^1(\overline\Omega)}\leq C(1+h^{-1}|\zeta|)^{(1+|\alpha|)/2}e^{-\frac c h \operatorname{Im}\zeta_1}e^{\frac1h|\operatorname{Im}\zeta'|}.
\end{align} 


Now we are ready to show the following proposition.  
\begin{proposition}\label{prop:Global result}
Let $\Omega\subset\R^n$, $n\geq 2$ be an open bounded connected set with smooth boundary and $\Gamma_2$ be an open nonempty proper subset of $\p\Omega$. Denote $\widetilde\Gamma_2=\p\Omega\backslash\Gamma_2$. Let $F\in L^\infty(\Omega;\C^n)$ and $g\in L^\infty(\Omega; \C)$. Suppose for all harmonic functions $v_{m+1}\in C^\infty(\overline{\Omega})$ with $\supp (v_{m+1}|_{\p\Omega})\subset \Gamma_2$ such that
\begin{align}\label{identity Fg}
    F(x)\cdot Dv_{m+1}(x)+g(x)v_{m+1}(x)=0 \qquad a.e. \textrm{ in }\Omega.
\end{align}  
Then we have $F=0$ and $g=0$ a.e. in $\Omega$.
\end{proposition}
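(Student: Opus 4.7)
My plan is to establish $F = g = 0$ a.e.\ in $\Omega$ in two stages: first, a local vanishing in a one-sided neighborhood of $\Gamma_2$ inside $\Omega$ obtained by plugging the CGO family $v_{m+1}(x,\zeta) = e^{-ix\cdot\zeta/h}+\widetilde w(x,\zeta)$ into the transport identity \eqref{identity Fg}; second, a global extension obtained via the $H^1$ Runge-type approximation for harmonic functions whose boundary data are supported in $\Gamma_2$.

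For the local step, I use the CGO ansatz together with $Dv_{m+1} = -(\zeta/h)e^{-ix\cdot\zeta/h}+D\widetilde w$ to rewrite \eqref{identity Fg} as
\[
F(x)\cdot \zeta \;=\; h\,g(x) + h\,e^{ix\cdot\zeta/h}\bigl[F(x)\cdot D\widetilde w(x,\zeta) + g(x)\widetilde w(x,\zeta)\bigr]
\]
a.e.\ in $\Omega$ for every admissible $\zeta$. I would then choose $\zeta = (0,\xi')+i|\xi'|{\bf e}_1$ (so $\zeta\cdot\zeta = 0$ and $\operatorname{Im}\zeta_1 = |\xi'|\ge 0$): the estimate \eqref{diff w} combined with $|e^{ix\cdot\zeta/h}| = e^{-x_1|\xi'|/h}$ bounds the rightmost bracket term by $Ch(1+h^{-1}|\zeta|)^{(1+|\alpha|)/2}\,e^{-(c+x_1)|\xi'|/h}$, which tends to $0$ as $h\to 0^+$ whenever $x_1>-c$. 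This yields $F(x)\cdot\zeta = 0$ a.e.\ in $U := \Omega\cap\{x_1 > -c\}$; separating real and imaginary parts while varying $\xi'\in\R^{n-1}$ then forces $F\equiv 0$ in $U$. Substituting $F=0$ back into \eqref{identity Fg} gives $g\cdot v_{m+1}=0$ in $U$, and taking $\zeta$ so that $v_{m+1}$ is nonvanishing a.e.\ (for any $\xi'\neq 0$) yields $g\equiv 0$ in $U$.

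For the global step, I would invoke the $H^1$ Runge-type approximation: for any $\Omega_0\subset\subset\Omega$ such that $\Omega\setminus\overline{\Omega_0}$ is connected and meets $\widetilde\Gamma_2$, every harmonic function on $\Omega_0$ is the $H^1(\Omega_0)$-limit of restrictions of harmonic functions on $\Omega$ whose boundary data are supported in $\Gamma_2$. Since $F,g\in L^\infty(\Omega)$, the identity \eqref{identity Fg} passes to the limit and so it holds a.e.\ in $\Omega_0$ for every harmonic $v$ on $\Omega_0$. In this enlarged test class the plain exponentials $v(x) = e^{-ix\cdot\zeta/h}$ are harmonic on $\Omega_0$ whenever $\zeta\cdot\zeta = 0$ and need no corrector, reducing the identity to $F(x)\cdot\zeta = h\,g(x)$; letting $h\to 0$ and varying $\zeta$ gives $F\equiv 0$ on $\Omega_0$, and then testing with a nontrivial harmonic $v$ (for example $v\equiv 1$) yields $g\equiv 0$ on $\Omega_0$. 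Exhausting $\Omega$ by such $\Omega_0$'s (for instance $\Omega\setminus N_\epsilon(\widetilde\Gamma_2)$ as $\epsilon\to 0$) gives $F=g=0$ a.e.\ in $\Omega$.

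The main obstacle I anticipate is the rigorous justification of the $H^1$ Runge-type approximation in this partial-data setting: the standard proof proceeds via Hahn-Banach duality and ultimately rests on a boundary unique continuation theorem for the Laplacian, ensuring that an adjoint harmonic function which vanishes together with its normal derivative on the open set $\Gamma_2$ must vanish identically in $\Omega$. A secondary technicality is that in Step 1 the null set in the ``a.e.'' assertion a priori depends on $\zeta$; one must use continuity of both sides in $\zeta$ and a countable dense subfamily of $\zeta$'s to extract a common null set, so that the pointwise variation in $\xi'$ is legitimate.
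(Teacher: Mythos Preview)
Your local step (Step~1) is essentially the paper's Step~1: you feed the corrected exponentials $e^{-ix\cdot\zeta/h}+\widetilde w$ into the transport identity and let $h\to 0$, obtaining $F\cdot\zeta=0$ near $\Gamma_2$. The paper picks the finitely many phases $\zeta=i{\bf e}_1\pm{\bf e}_j$ while you vary over $\zeta=(i|\xi'|,\xi')$; the linear algebra is the same.

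Your global step, however, is genuinely different from the paper's, and in fact more direct. The paper does \emph{not} invoke a global Runge approximation on compactly contained subdomains. Instead it runs a connectedness argument along a curve $\theta$ from a point of $\Gamma_2$ to an arbitrary $x_1\in\Omega$: at each stage it carves out a smoothed tube $\Omega\setminus\Omega_1\approx\Theta_\varepsilon(t)$, augments $\Omega$ slightly to $\Omega_2$, uses the Green kernel $G_2$ of $\Omega_2$ together with an $H^1$-density lemma (Lemma~2.2 of \cite{KU201909}) to reduce to the transport identity on $\Omega_1$ with test functions vanishing on $\partial\Omega_1\cap\partial\Omega_2$, and then \emph{reapplies the local Step~1} on $\Omega_1$ to push the vanishing a bit further along the curve. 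So in the paper Step~1 is essential, being invoked repeatedly in the open--closed argument.

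Your route bypasses all of this: once the partial-data Runge approximation gives $F\cdot Dv+gv=0$ on $\Omega_0\subset\subset\Omega$ for \emph{every} harmonic $v$ on $\Omega_0$, you may simply take $v\equiv 1$ to get $g=0$ and $v=x_j$ to get $F_j=0$---no exponentials, no corrector, and Step~1 becomes redundant. This is a real simplification. The price is that you must state and prove the Runge lemma you need; the duality/unique-continuation argument you sketch is correct for any $\Omega_0\subset\subset\Omega$ with $\Omega\setminus\overline{\Omega_0}$ connected (the boundary unique continuation on $\Gamma_2$ kills the dual solution near $\Gamma_2$, and harmonicity plus connectedness propagates this to all of $\Omega\setminus\overline{\Omega_0}$).

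Two small corrections. First, your side condition ``$\Omega\setminus\overline{\Omega_0}$ meets $\widetilde\Gamma_2$'' is both misstated and unnecessary: for $\Omega_0\subset\subset\Omega$ the complement automatically abuts all of $\partial\Omega$, and what the Runge argument actually uses is that it touches $\Gamma_2$. Second, your exhaustion example $\Omega_0=\Omega\setminus N_\epsilon(\widetilde\Gamma_2)$ is \emph{not} compactly contained in $\Omega$, so it does not fit your own framework. The clean fix is to drop the exhaustion entirely: for each $x_1\in\Omega$ take $\Omega_0=B(x_1,r)$ with $r$ small; then $\Omega\setminus\overline{B(x_1,r)}$ is connected (since $n\ge 2$), the Runge lemma applies, and you conclude $F=g=0$ on $B(x_1,r)$. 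As $x_1$ is arbitrary, you are done.
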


\begin{remark}
We remark here that in the full data setting  $(\Gamma_2=\p\Omega)$, an individual proof of this proposition can be found in Appendix~\ref{append:alter_pf}. 
\end{remark}

\begin{proof}
\noindent\textbf{Step $1$: Local result.} 
As discussed above, we take $x_0=0\in \Gamma_2$ without loss of generality.
Substituting $v_{m+1}$ defined in \eqref{fun vm+1} with nontrivial boundary data $v_{m+1}|_{\p\Omega}\neq 0$ into \eqref{identity Fg}, we obtain
\begin{align}\label{local}
F(x)\cdot\zeta=hF(x)\cdot D\widetilde w e^{ix\cdot\zeta/h}+hg(x)+hg(x)\widetilde w e^{ix\cdot\zeta/h}.
\end{align} 
Let $\zeta=(i,1,0,\ldots,0)^T$. 
Then for all $x\in\overline{\Omega}$ such that $x_1>-c$, we have from the above estimates \eqref{diff w} for $\widetilde w$ that
\[|\widetilde w(x) e^{ix\cdot\zeta/h}|, \quad |D\widetilde w(x) e^{ix\cdot\zeta/h}|\leq Ce^{-\frac{x_1}h \operatorname{Im}\zeta_1}(1+h^{-1}|\zeta|)^{(1+|\alpha|)/2}e^{-\frac ch\operatorname{Im}\zeta_1}e^{\frac1h|\operatorname{Im}\zeta'|}.\]
When $h\rightarrow0$, this implies that the right-hand side of \eqref{local} $$hF(x)\cdot D\widetilde w e^{ix\cdot\zeta/h}+hg(x)+hg(x)\widetilde w e^{ix\cdot\zeta/h}$$ vanishes. Hence $F(x)\cdot\zeta=0$, at every point $x\in\overline\Omega$ with $x_1>-c$, i.e., in a neighborhood of $x_0=0$. Similarly, by choosing $\zeta'=(i,-1,0,\ldots,0)^T$ instead, we can derive that $F(x)\cdot \zeta'=0$. These two identities $F(x)\cdot\zeta=0$ and $F(x)\cdot\zeta'=0$ indicate the first two components of $F(x)$ indeed vanish. 

Furthermore, by choosing other 
\[\zeta=i{\bf e}_1 +{\bf e}_j\qquad \textrm{ for } j=3,\ldots, n,\]
one can show that the other components of $F(x)$ vanish too, which implies that $F=0$. 
Thus, we can also obtain that $g=0$ from the equation \eqref{identity Fg} and the fact that $v_{m+1}^{-1}(0)$ has measure zero. Finally we have derived that
$F=0$ and $g=0$ in a neighborhood of every point $x_0\in\Gamma_2$ provided that \eqref{identity Fg} hold for all harmonic functions $v_{m+1}$ with boundary data that is supported in $\Gamma_2$.  \\

\noindent\textbf{Step $2$: Global result.} To extend the local result to any point $x_1$ of $\Omega$, we take a point $x_0\in\Gamma_2$ and let $\theta: [0,1]\rightarrow\overline{\Omega}$ be a $C^1$ curve joining $x_0$ and $x_1$ such that $\theta(0)=x_0$ and $\theta'(0)$ is the inner normal to $\p\Omega$ at $x_0$, and $\theta(t)\in\Omega$ for $t\in(0,1]$. We set
\[\Uptheta_\varepsilon(t)=\{x\in\overline\Omega:~d(x, \theta([0,t]))\leq \varepsilon\},\]
a closed neighborhood of the curve $\theta(s)$, $s\in[0,t]$. Let 
\[I=\{t\in[0,1]:~F=0,\quad g=0\quad a.e. \;\textrm{on } \Uptheta_\varepsilon(t)\cap \Omega\}.\]
The above local result indicates that $0\in I$ if $\varepsilon>0$ is small enough. Moreover, it is clear that $I$ is a closed subset of $[0,1]$. If we can further show that $I$ is also open, then we can get $I=[0,1]$, which further implies that $x_1\notin \supp (F)\cup\supp(g)$. Since $x_1$ is an arbitrary point in $\Omega$, we then have $F=0$ and $g=0$ in $\Omega$. This will complete the proof of the global result.

To show that $I$ is open in $[0,1]$, we take $t\in I$ and $\varepsilon>0$ small enough so that $\p\Uptheta_\varepsilon(t)\cap\p\Omega\subset \Gamma_2$. It is easy to see that the set $\Omega\backslash \Uptheta_\varepsilon(t)$ can be smoothed out into an open subset $\Omega_1$ of $\Omega$ with smooth boundary so that
\[\Omega_1\supset\Omega\backslash\Uptheta_\varepsilon(t),\qquad \p\Omega\cap\p\Omega_1\supset\widetilde\Gamma_2.\]

We further augment the set $\Omega$ by smoothing out the set $\Omega\cup B(x_0,\varepsilon')$ with $0<\varepsilon'\ll \varepsilon$ sufficiently small, into an open set $\Omega_2$ so that 
$$
\p\Omega_2\cap\p\Omega\supset \p\Omega\cap\p\Omega_1=\p\Omega_1\cap\p\Omega_2\supset \widetilde{\Gamma}_2.
$$

Now we let $G_2$ be the Green kernel associated to the open set $\Omega_2$ and 
$$
-\Delta_y G_2(x,y) = \delta(x-y)\quad \hbox{ in }\Omega_2,\ \ G_2(x,y)|_{\p\Omega_2} =0.
$$
We consider the function 
\[\Phi(x;y):=F(y)\cdot D_yG_2(x,y)+g(y)G_2(x,y),\quad y\in\Omega_1,\quad x\in\Omega_2\backslash\overline{\Omega_1}.\]
It is clear that $\Phi(x;y)$ is harmonic in $x$ on $\Omega_2\backslash\overline{\Omega_1}$ for a fixed $y\in \Omega_1$. Since $F(y)=0$ and $g(y)=0$ for $y\in\Uptheta_\varepsilon(t)\cap\Omega$, we can extend $\Phi(x;y)$ by zero to $y\in\Omega$. When $x\in \Omega_2\backslash\overline{\Omega}$, the Green function $G_2(x,y)$ is a harmonic function in $y$ on $\Omega$ with $G_2(x,\cdot)|_{\widetilde\Gamma_2}=0$. By \eqref{identity Fg}, we have
\[\Phi(x;y)=0,\qquad \textrm{ for } a.e. \quad y\in \Omega, \quad x\in \Omega_2\backslash\overline{\Omega}.\]
Since $\Phi(x;y)$ is harmonic in $x$ on $\Omega_2\backslash\overline{\Omega_1}$ and the set $\Omega_2\setminus\overline{\Omega}_1$ is connected, by the unique continuation, we then have 
\[\Phi(x;y)=0, \qquad \textrm{ for } a.e. \quad y\in \Omega_1,\quad x\in \Omega_2\backslash\overline{\Omega_1}.\]

By Lemma 2.2 of \cite{KU201909} ($H^1$-density), we have that for any $v\in C^\infty(\overline{\Omega_1})$ harmonic with $v|_{\p\Omega_1\cap\p\Omega_2}=0$ and arbitrary small $\epsilon>0$, there exists $a\in C^\infty(\overline{\Omega_2})$ with $\supp(a)\subset \Omega_2\backslash\overline{\Omega_1}$ such that 
\[\left\|v(y)-\int_{\Omega_2}G_2(x,y)a(x)~dx\right\|_{H^1(\Omega_1)}<\epsilon.\]
We multiply $\Phi(x;y)$ by $a(x)$ and then integrate it with respect to $x$ on $\Omega_2$. We obtain
\[F(y)\cdot D_y\int_{\Omega_2}G_2(x,y)a(x)~dx+g(y)\int_{\Omega_2}G_2(x,y)a(x)~dx=0,\quad a.e. \quad y\in\Omega_1,\]
and, moreover, we can derive that
\begin{align*}
&\|F\cdot Dv+gv\|_{L^2(\Omega_1)}\\
&\leq \left\|F\cdot D\int_{\Omega_2}G_2(x,\cdot)a(x)~dx+g\int_{\Omega_2}G_2(x,\cdot)a(x)~dx\right\|_{L^2(\Omega_1)}+C\epsilon=C\epsilon
\end{align*}
for arbitrary small $\varepsilon>0$. This implies that 
\[F\cdot Dv+gv=0\qquad a.e. \textrm{ in }\Omega_1\]
for $v\in C^\infty(\overline{\Omega_1})$ harmonic with $v|_{\p\Omega_1\cap\p\Omega_2}=0$. By the above local result in Step $1$, we then have $F=0$ and $g=0$ in an open neighborhood of $\p\Omega_1\setminus (\p\Omega_1\cap\p\Omega_2)$ and this implies that $F$ and $g$ vanish on a slightly larger neighborhood $\Theta_\varepsilon(t'),\ t'>t$ of the curve. 
This proves that $I$ is open, hence completes the proof. 

\end{proof}

\begin{proof}[Proof of Theorem~\ref{thm:global uniqueness}]
From Proposition~\ref{HigherIntegralIdentity}, we have the integral identity holds for $m=2$. By applying Lemma~\ref{lemma transport}, Proposition~\ref{prop:Global result} and \eqref{Fg}, we have  
$ 
 F=0, \ g=0,
$  
which implies that $$\p_z A_1(x,0)= \p_zA_2(x,0),\qquad \p_z^2 q_1(x,0)=\p_z^2 q_2(x,0).$$
Given any integer $m>2$, by induction argument, suppose that for $k=2,\ldots,m-1$, the following are true:
$$\p_z^{k-1} A_1(x,0)= \p_z^{k-1} A_2(x,0),\qquad \p_z^k q_2(x,0)=\p_z^k q_1(x,0).$$ We want to show that $\p_z^{m-1} A_1(x,0)= \p_z^{m-1} A_2(x,0)$ and $\p_z^m q_1(x,0)=\p_z^m q_2(x,0)$ also hold.

From above, we have known that $A_j$ and $q_j$ satisfy the conditions \eqref{AAssumption} and \eqref{QAssumption} and thus we can apply Proposition~\ref{HigherIntegralIdentity} to get the integral \eqref{keyId} for such $m>2$. Applying Lemma~\ref{lemma transport} and Proposition~\ref{prop:Global result} again, we then derive that $F=0,\ g=0$, which gives
$$
    0= \p_z^{m-1}A_2(x,0) - \p_z^{m-1}A_1(x,0) 
$$
and 
$$
    0= \p_z^m q_2(x,0)-\p_z^m q_1(x,0).
$$
Therefore, we complete the proof of Theorem~\ref{thm:global uniqueness}.
\end{proof}

\appendix
\section{Well-posedness of the nonlinear magnetic Schr\"odinger equation}\label{sec:pre}

In this section, we prove that the boundary value problem \eqref{eqn:MagnSch} is well-posed if the small boundary data is given. The analysis is based on the contraction mapping principle.

\begin{theorem}[Well-posedness]\label{Thm:well-posedness}
	Let $A(x,z)$ and $q(x,z)$ satisfy \eqref{A_holomorphic}-\eqref{q_holomorphic}.  
	Moreover, suppose that $q(x,0)=0$ and $0$ is not a Dirichlet eigenvalue of the linear operator $$\mathcal{L}_0:=(D+A(x,0))^2+\p_z q(x,0).$$ Then there exists a small constant $\varepsilon>0$ such that for any $\|f\|_{W^{2-1/p,p}(\p\Omega)}\leq \varepsilon$, the boundary value problem
	\begin{align}
\left\{\begin{array}{ll}
\LC D + A (x,u) \RC^2 u + q(x,u) =0 & \hbox{in }\Omega,\\
u=f & \hbox{on }\p\Omega,\\
\end{array}\right.
\end{align}
	admits a unique solution $u\in W^{2,p}(\Omega)$. 
		Moreover, there exists a constant $C>0$ independent of $f$ such that 
		\begin{align}
		\|u\|_{W^{2,p}(\Omega)} \leq C\|f\|_{W^{2-1/p,p}(\p\Omega)}.
		\end{align}
\end{theorem}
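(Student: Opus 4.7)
\medskip

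\noindent\textbf{Proof proposal for Theorem~\ref{Thm:well-posedness}.}

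The plan is to recast the Dirichlet problem as a fixed point equation for a contraction on a small ball of $W^{2,p}(\Omega)$. First, I would expand the quasilinear operator into a principal linear part plus a purely nonlinear (quadratic) remainder. Writing $A(x,u)=A(x,0)+(A(x,u)-A(x,0))$ and $q(x,u)=\partial_z q(x,0)\,u+(q(x,u)-\partial_z q(x,0)u)$ (using the hypothesis $q(x,0)=0$ together with holomorphy in $z$), a direct computation yields
\begin{equation*}
(D+A(x,u))^2u+q(x,u)=\mathcal{L}_0 u - G(u),
\end{equation*}
where
\begin{equation*}
\mathcal{L}_0 u=-\Delta u+2A(x,0)\cdot Du+(D_x\cdot A)(x,0)\,u+|A(x,0)|^2 u+\partial_z q(x,0)\,u,
\end{equation*}
and $G(u)$ collects all the remaining terms, each containing a factor of the form $A(x,u)-A(x,0)$, $|A(x,u)|^2-|A(x,0)|^2$, $u(\partial_z A)(x,u)$, or $q(x,u)-\partial_z q(x,0)u$. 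By holomorphy in $z$, each such factor vanishes to order at least one at $u=0$, so the whole remainder $G(u)$ vanishes quadratically in $u$.

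Next I would solve the linear Dirichlet problem $\mathcal{L}_0 v = F$ in $\Omega$, $v=f$ on $\partial\Omega$. Since $A(x,0)\in W^{1,\infty}$ and $\partial_z q(x,0)\in L^\infty$, the coefficients of $\mathcal{L}_0$ are admissible for the standard $L^p$ theory for second-order elliptic operators (Agmon--Douglis--Nirenberg). Because $0$ is not a Dirichlet eigenvalue of $\mathcal{L}_0$, Fredholm alternative yields a unique solution operator $\mathcal{S}\colon L^p(\Omega)\times W^{2-1/p,p}(\partial\Omega)\to W^{2,p}(\Omega)$ with
\begin{equation*}
\|\mathcal{S}(F,f)\|_{W^{2,p}(\Omega)}\le C\bigl(\|F\|_{L^p(\Omega)}+\|f\|_{W^{2-1/p,p}(\partial\Omega)}\bigr).
\end{equation*}
I would then define the iteration
\begin{equation*}
T(w):=\mathcal{S}(G(w),f),\qquad w\in W^{2,p}(\Omega),
\end{equation*}
and note that fixed points of $T$ are exactly $W^{2,p}$-solutions of the original boundary value problem.

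To close the argument I would work on the ball $B_\delta=\{w\in W^{2,p}(\Omega):\|w\|_{W^{2,p}(\Omega)}\le \delta\}$. Since $p>n$, the Sobolev embedding $W^{2,p}(\Omega)\hookrightarrow C^1(\overline\Omega)$ makes $W^{2,p}(\Omega)$ a Banach algebra, and for any $w\in B_\delta$ the compositions $A(x,w(x))$ and $q(x,w(x))$ (together with their $z$-derivatives) are well-defined and holomorphic functionals of $w$ thanks to \eqref{A_holomorphic}--\eqref{q_holomorphic}. Using holomorphy to write each factor $A(x,w)-A(x,0)$, $q(x,w)-\partial_z q(x,0)w$, etc., as $w$ times a convergent power series with $W^{1,\infty}$-coefficients, I would establish the quadratic bounds
\begin{equation*}
\|G(w)\|_{L^p(\Omega)}\le C_1\|w\|_{W^{2,p}(\Omega)}^2,\qquad
\|G(w_1)-G(w_2)\|_{L^p(\Omega)}\le C_2\,\delta\,\|w_1-w_2\|_{W^{2,p}(\Omega)}
\end{equation*}
for all $w,w_1,w_2\in B_\delta$. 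Combining these with the linear estimate for $\mathcal{S}$, one gets $\|T(w)\|_{W^{2,p}}\le C(C_1\delta^2+\|f\|_{W^{2-1/p,p}})$ and a Lipschitz constant $CC_2\delta$ for $T$ on $B_\delta$. Choosing first $\delta$ so small that $CC_2\delta\le 1/2$, and then $\varepsilon$ so small that $\|f\|_{W^{2-1/p,p}}\le\varepsilon$ forces $T(B_\delta)\subset B_\delta$, the Banach fixed point theorem yields a unique $u\in B_\delta$ with $T(u)=u$; the contraction argument also gives $\|u\|_{W^{2,p}}\le 2C\|f\|_{W^{2-1/p,p}}$, which is the claimed estimate. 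Uniqueness in $W^{2,p}$ follows because any other small solution lies in $B_\delta$ by continuity.

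The routine part is the linear solvability of $\mathcal{L}_0$; the main technical obstacle is establishing the quadratic Lipschitz bounds for $G$. In particular, the term $u(\partial_z A)(x,u)\cdot Du$ mixes $u$ with $Du$ through a Nemytskii map, so one has to carefully exploit both the Banach algebra property of $W^{2,p}(\Omega)$ and the uniform convergence (on bounded sets of $\mathbb{C}$) of the Taylor series of $A(x,\cdot)$ and $q(x,\cdot)$ with values in $W^{1,\infty}(\Omega,\mathbb{C}^n)$ and $L^\infty(\Omega,\mathbb{C})$ respectively, in order to pass from pointwise holomorphy to norm estimates on the nonlinear operator.
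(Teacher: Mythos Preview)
Your proposal is correct and follows essentially the same contraction-mapping strategy as the paper. The only organizational difference is that the paper first solves the homogeneous linear problem $\mathcal{L}_0 u_0=0$, $u_0|_{\partial\Omega}=f$, and then iterates on the remainder $v=u-u_0$ in $W^{2,p}\cap W^{1,p}_0(\Omega)$ (citing Gilbarg--Trudinger for the linear estimates), whereas you absorb the boundary data into a single solution operator $\mathcal{S}$ and iterate directly on $u$; the quadratic and Lipschitz bounds on the nonlinear remainder are obtained in the same way via the embedding $W^{2,p}(\Omega)\hookrightarrow C^1(\overline\Omega)$.
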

\begin{proof}
	We will use contraction mapping principle to show the existence of solution to \eqref{eqn:MagnSch}. \\
	
\noindent\textbf{Step 1: Linearization.} 	{First, for $A(x,z)$ and $q(x,z)$ satisfying \eqref{A_holomorphic}-\eqref{q_holomorphic}, we use the Taylor formulas
		\[\begin{split}A(x,z)& 
		=A(x,0)+A_r(x,z)z,\\
		q(x,z)& 
		=\partial_zq(x,0)z+q_r(x,z)z^2,\end{split}\]
where we denote
$$
    A_r(x,z):=\int_0^1\partial_zA(x,tz)~dt,\qquad q_r(x,z):=\int_0^1\partial_z^2q(x,tz)(1-t)~dt.
$$	
	
		Given $f\in W^{2-1/p, p}(\partial\Omega)$ for $p\in (n,+\infty)$, by Theorem 9.15 of \cite{gilbarg2015elliptic}, there exists a unique solution $u_0\in W^{2,p}(\Omega)$ to the Dirichlet problem  
		\begin{align}\label{eqn: u0}
		\left\{\begin{array}{ll}
		\mathcal{L}_{0}u_0:=(D+A(x,0))^2 u_0 + \p_z q(x,0)u_0 = 0 & \hbox{in }\Omega,\\
		u_0 = f & \hbox{on }\p\Omega.\\
		\end{array}\right.
		\end{align}
		Moreover, we have 
		\[\|u_0\|_{W^{2,p}(\Omega)}\leq C\|f\|_{W^{2-1/p,p}(\partial\Omega)}.\]
		(This can be obtained by extending $f$ to a $W^{2,p}(\Omega)$ function and apply Lemma 9.17 of \cite{gilbarg2015elliptic} to the equation for the difference of the solution and the extended function.)
		
        Thus, if $u$ is a solution to \eqref{eqn:MagnSch} we have the remainder function $v:=u-u_0$ satisfying the following problem	
		\begin{equation}\label{eqn:v}\mathcal{L}_{0} v=\mathcal F(v),\qquad v|_{\partial\Omega}=0,\end{equation}
		where  
		\[\begin{split}\mathcal F(v):=&-\left(D+A(x,0)\right)\cdot\left[A_r(x,u_0+v)(u_0+v)^2\right]\\
		&-A_r(x,u_0+v)(u_0+v)\cdot \left(D+A(x,0)\right)(u_0+v)\\
		&-A_r(x,u_0+v)^2(u_0+v)^3-q_r(x,u_0+v)(u_0+v)^2.
		\end{split}\]
		By Theorem 9.15 in \cite{gilbarg2015elliptic} again, for $F\in L^p(\Omega)$, there exists a unique solution $\tilde u\in W^{2,p}(\Omega)\cap W^{1,p}_0(\Omega)$ to the equation $\mathcal{L}_{0} \tilde u=F\in L^p(\Omega)$ in $\Omega$ with trivial boundary data. We denote the solution operator  by 
		$$
		\mathcal{L}_{0}^{-1}:~L^p(\Omega)~\rightarrow~W^{2,p}(\Omega)\cap W^{1,p}_0(\Omega),
		$$ 
		which is the continuous operator $F\mapsto \tilde u$ and thus $\mathcal{L}_{0}^{-1}(F)$ is the solution to $\mathcal{L}_{0} \tilde u=F\in L^p(\Omega)$ in $\Omega$ with trivial boundary condition. Therefore, we are looking for the unique fixed point $v$ of $\mathcal L_0^{-1}\circ\mathcal F$.\\

\noindent\textbf{Step 2: A contraction map.} 
		In what follows, we will show that $\mathcal{L}_{0}^{-1}\circ \mathcal F$ is indeed a contraction map on a suitable subset $X_\delta$ of $W^{2,p}(\Omega)\cap W^{1,p}_0(\Omega)$. 
		Here we denote the set $X_\delta$ for $1>\delta>0$ by
		$$X_\delta:=\{v\in W^{2,p}(\Omega)\cap W^{1,p}_0(\Omega)~|~\|v\|_{W^{2,p}(\Omega)}\leq \delta\}.$$

		We first show that $(\mathcal{L}_{0}^{-1}\circ \mathcal F)(X_\delta)\subset X_\delta$.
		Recalling that, by the Sobolev embedding theorem, we have $W^{2,p}(\Omega)\hookrightarrow C^1(\Omega)$ if $p>n$. 
		For $v\in X_\delta$, we have $v, u_0\in C^1(\Omega)$ since $v, u_0\in W^{2,p}(\Omega)$. Thus we have that $A_r(x, u_0(x)+v(x))$ and $q_r(x, u_0(x)+v(x))$ are both bounded in $\Omega$. Moreover, since 
		\begin{align}\label{eqn:exp_DA}
		&D\cdot [A_r(x, u_0(x)+v(x))] \notag \\
		&=\int_0^1D_x\cdot \p_zA(x, t(u_0+v))~dt+\int_0^1 t \p_z^2A(x,t(u_0+v))~dt\cdot D(u_0+v),\end{align}
	    one can derive that 
		\begin{align*}
		\|\mathcal{F}(v)\|_{L^p(\Omega)}&\leq C\|u_0+v\|_{C^1(\Omega)}\|u_0+v\|_{L^p(\Omega)} \\
		&\leq C\|u_0+v\|_{W^{2,p}(\Omega)}^2\leq C(\|u_0\|_{W^{2,p}(\Omega)}^2+\|v\|_{W^{2,p}(\Omega)}^2).
		\end{align*}
		This implies that, for $\|f\|_{W^{2-1/p,p}(\p\Omega)}<\varepsilon$ and $p>n$, one has
		\begin{align}\label{eqn:est_LF}
		\|\mathcal{L}_{0}^{-1}(\mathcal F(v))\|_{W^{2,p}(\Omega)}
		&\leq C\|\mathcal F(v)\|_{L^p(\Omega)} \notag\\		
		&\leq C(\|f\|_{W^{2-1/p,p}(\p\Omega)}^2+\|v\|_{W^{2,p}(\Omega)}^2)\leq C(\varepsilon^2+\delta^2).\end{align}
		Therefore, for $\varepsilon$ and $\delta$ small enough, the operator $L_{0}^{-1}\circ \mathcal F$ maps $X_\delta$ into itself. 
		
		Next we show that $\mathcal{L}_{0}^{-1}\circ\mathcal F$ is a contraction on $X_\delta$. To this end, we take $v_1, v_2\in X_\delta$ and consider 
		\[\begin{split}\|\mathcal{L}_{0}^{-1}\circ\mathcal F(v_1)-\mathcal{L}_{0}^{-1}\circ\mathcal F(v_2)\|_{W^{2,p}(\Omega)}&=\|\mathcal{L}_{0}^{-1}(\mathcal F(v_1)-\mathcal F(v_2))\|_{W^{2,p}(\Omega)}\\
		&\leq C\|\mathcal F(v_1)-\mathcal F(v_2)\|_{L^p(\Omega)}.
		\end{split}\]
		In addition, we rewrite 
		\[\begin{split}-\mathcal F(v)&=D\cdot (A_r(x,u_0+v))(u_0+v)^2+3(A_r(x,u_0+v)\cdot D(u_0+v))(u_0+v)\\
		&\quad+2A(x,0)\cdot A_r(x,u_0+v)(u_0+v)^2+A_r(x,u_0+v)^2(u_0+v)^3\\
		&\quad+q_r(x,u_0+v)(u_0+v)^2.
		\end{split}\]
		Thus, $\mathcal F(v_2)-\mathcal F(v_1)$ is the sum of the following two terms
		\[\begin{split}
		\textrm{I} &= D\cdot (A_r(x, u_0+v_1))[(u_0+v_1)^2-(u_0+v_2)^2]\\
		&\quad+3A_r(x,u_0+v_1)\cdot [D(u_0+v_1)(u_0+v_1)-D(u_0+v_2)(u_0+v_2)]\\
		&\quad+2A(x,0)\cdot A_r(x,u_0+v_1)[(u_0+v_1)^2-(u_0+v_2)^2]\\
		&\quad+A_r(x,u_0+v_1)^2[(u_0+v_1)^3-(u_0+v_2)^3]\\
		&\quad+q_r(x,u_0+v_1)[(u_0+v_1)^2-(u_0+v_2)^2],\\
		\textrm{II} &= [D\cdot (A_r(x,u_0+v_1))-D\cdot (A_r(x,u_0+v_2))](u_0+v_2)^2\\
		&\quad+3(A_r(x,u_0+v_1)-A_r(x,u_0+v_2))\cdot D(u_0+v_2)(u_0+v_2)\\
		&\quad+2A(x,0)\cdot[A_r(x,u_0+v_1)-A_r(x,u_0+v_2)](u_0+v_2)^2\\
		&\quad+[A_r(x, u_0+v_1)^2-A_r(x, u_0+v_2)^2](u_0+v_2)^3\\
		&\quad+(q_r(x,u_0+v_1)-q_r(x,u_0+v_2))(u_0+v_2)^2.
		\end{split}\]
		For the first term, we obtain
		\[\begin{split}\|\textrm I\|_{L^p(\Omega)} &\leq  C\big\{(\|u_0\|_{C^1(\Omega)}+\|v_1\|_{C^1(\Omega)}+\|v_2\|_{C^1(\Omega)})\|v_1-v_2\|_{L^p(\Omega)}\\
		&\quad+(\|u_0\|_{L^p(\Omega)}+\|v_1\|_{L^p(\Omega)}+\|v_2\|_{L^p(\Omega)} )\|v_1-v_2\|_{C^1(\Omega)}\big\}\\
		&\leq C(\|u_0\|_{W^{2,p}(\Omega)}+\|v_1\|_{W^{2,p}(\Omega)}+\|v_2\|_{W^{2,p}(\Omega)})\|v_1-v_2\|_{W^{2,p}(\Omega)}\\
		&\leq C(\varepsilon+\delta)\|v_1-v_2\|_{W^{2,p}(\Omega)}.
		\end{split}\]
		For II, we have
		\[\begin{split}
		&\|\textrm{II}\|_{L^p(\Omega)}\\
		&\leq C\|u_0+v_2\|_{C^1(\Omega)}^2\big\{\|D\cdot [A_r(x,u_0+v_1)]-D\cdot [A_r(x, u_0+v_2)]\|_{L^p(\Omega)}\\
		&\quad +\|A_r(x,u_0+v_1)-A_r(x,u_0+v_2)\|_{L^p(\Omega)}+\|q_r(x,u_0+v_1)-q_r(x,u_0+v_2)\|_{L^p(\Omega)}\big\}.
		\end{split}\]
		By \eqref{eqn:exp_DA} and that $D\cdot\partial_zA(x,z)$, $\partial_z^2A(x,z)$, $\partial_zA(x,z)$ and $\partial_z^2q(x,z)$ are all Lipschitz in $z$ {(where the Lipschitz constants are independent of $x$ by the boundedness of $\p_z^k A$ and $\p_z^k q$)}, we obtain
		\[\|\textrm{II}\|_{L^p(\Omega)}\leq C\|u_0+v_2\|^2_{C^1(\Omega)}\|v_1-v_2\|_{W^{1,p}(\Omega)}\leq C(\varepsilon^2+\delta^2)\|v_1-v_2\|_{W^{2,p}(\Omega)}.
		\] 
		Combining above estimates together, we obtain 
		\[\|\mathcal F(v_1)-\mathcal F(v_2)\|_{L^p(\Omega)}\leq C(\delta+\varepsilon+\delta^2+\varepsilon^2)\|v_1-v_2\|_{W^{2,p}(\Omega)}.\]
		Therefore, $\mathcal{L}_{0}^{-1}\circ\mathcal F$ is a contraction on $X_\delta$ for $\varepsilon$ and $\delta$ small enough. Using the contraction mapping theorem, there exists a unique fixed point $v\in X_\delta$ of $\mathcal{L}_{0}^{-1}\circ \mathcal F$, namely, 
		$$(\mathcal{L}_{0}^{-1}\circ \mathcal F)(v)=v,$$
        and hence $v$ solves \eqref{eqn:v}. Substituting the fixed point $v$ into the second inequality of \eqref{eqn:est_LF}, we then have
		\[\|v\|_{W^{2,p}(\Omega)}\leq C(\varepsilon\|f\|_{W^{2-1/p,p}(\partial\Omega)}+\delta\|v\|_{W^{2,p}(\Omega)}).\]
		For $\delta$ small enough, this gives
		\[\|v\|_{W^{2,p}(\Omega)}\leq C\|f\|_{W^{2-1/p,p}(\partial\Omega)}.\]
		Finally, we obtain $u=u_0+v\in W^{2,p}(\Omega)$ which solves \eqref{eqn:MagnSch} and satisfies
		\[\|u\|_{W^{2,p}(\Omega)}\leq C\|f\|_{W^{2-1/p,p}(\partial\Omega)}.\]

	}
 
\end{proof}

\section{An alternative proof of the full boundary data result}\label{append:alter_pf}
In this section, we provide a separate proof to show that the nonlinear potentials can be uniquely recovered when the boundary data are given on the whole boundary.

\begin{proof}[Proof of Theorem~\ref{thm:global uniqueness} \textup{(}when $\Gamma_1=\Gamma_2=\p\Omega)$]
We will begin by reproving Proposition~\ref{prop:Global result} here when $\Gamma_2=\p\Omega$. From identity \eqref{Fg_ID}, we substitute harmonic function
\[ 
v_{m+1} = e^{\zeta  \cdot x},
\]
into \eqref{Fg_ID}, where $\zeta \in \mathbb{C}^n$ satisfy $\zeta \cdot\zeta =0.$
Then we have
\begin{align}\label{A_ID0}
    F(x)\cdot\zeta +g(x)=0.
\end{align}
Since $\zeta$ is arbitrary with $\zeta\cdot\zeta=0$, we can take 
$$
\zeta=h{\bf e}_1 +i h{\bf e}_j  
$$
for $j=2,\ldots,n$ and $h\in\R$.
We then obtain from \eqref{A_ID0} that
\begin{align}\label{A_ID1}
     F(x)\cdot ({\bf e}_1+i {\bf e}_j  )= 0
\end{align}
as $h\rightarrow \infty$.
Similarly, we can take
$$
\zeta'=h{\bf e}_1-i h{\bf e}_j ,
$$
then we have
\begin{align}\label{A_ID2}
F(x)\cdot ({\bf e}_1-i {\bf e}_j )= 0.
\end{align}
Adding these two equations \eqref{A_ID1} and $\eqref{A_ID2}$ together, we get
$$
   F(x)\cdot {\bf e}_1=0,
$$ 
which implies the first component of $F$ vanishes. Following similar argument as above, we can then conclude $F=0$ in $\Omega$. Thus, from \eqref{Fg_ID}, we can also derive $g=0$ if we have known $F=0$. 

Finally, by following a similar argument as in the Proof of Theorem~\ref{thm:global uniqueness} for the partial data setting in Section~\ref{sec:partial data}, we obtain the uniqueness result with complete data.
\end{proof}

\vskip1cm

\bibliographystyle{abbrv}
\bibliography{NLSref}

\end{document}